\documentclass[10pt]{article}

\usepackage{amsmath}
\usepackage{amsfonts}
\usepackage{amsthm}
\usepackage[english]{babel}

\setlength{\topmargin}{0cm}
\setlength{\textheight}{22.5cm}
\setlength{\oddsidemargin}{0cm}
\setlength{\evensidemargin}{0cm}
\setlength{\textwidth}{16cm}

\parindent0pt
\thispagestyle{empty}

%%%%%%%%%%%%%%%%%%%%%%%%%%%%%%%%%%%%%%

\newtheorem{theorem}{Theorem}
\newtheorem{lemma}{Lemma}

\newtheorem{corollary}{Corollary}

\newtheorem{remark}{Remark}

\newcommand{\mR}{\mathbb{R}}
\newcommand{\mC}{\mathbb{C}}
\newcommand{\mN}{\mathbb{N}}
\newcommand{\mE}{\mathbb{E}}

%Calligraphic symbols are compactly denoted 
\newcommand{\hD}{\mathcal{D}}
\newcommand{\cM}{\mathcal{M}}

\newcommand{\cF}{\mathcal{F}}
\newcommand{\cP}{\mathcal{P}}
\newcommand{\cC}{\mathcal{C}}

%We use underlined symbols to denote vectors
\newcommand{\ux}{\underline{x}}
\newcommand{\uxb}{\underline{x} \grave{}}

\newcommand{\uyb}{\underline{y} \grave{}}
\newcommand{\uy}{\underline{y}}

\newcommand{\ds}{d \sigma_{\ux}}
\newcommand{\dsb}{d \sigma_{\uxb}}
\newcommand{\dsx}{d \sigma_{x}}

%All commands with regards to derivatives
\newcommand{\pj}{\partial_{x_j}}
\newcommand{\pjb}{\partial_{{x \grave{}}_{j}}}

\newcommand{\px}{\partial_x}

\newcommand{\upx}{\partial_{\underline{x}}}
\newcommand{\upxb}{\partial_{\underline{{x \grave{}}} }}

\begin{document}

\title{A Cauchy integral formula in superspace} 

\author{H.\ De Bie\footnote{Corresponding author, E-mail: {\tt Hendrik.DeBie@UGent.be}} \and F.\ Sommen\footnote{E-mail: {\tt fs@cage.ugent.be}}
}

\date{\small{Clifford Research Group -- Department of Mathematical Analysis}\\
\small{Faculty of Engineering -- Ghent University\\ Krijgslaan 281, 9000 Gent,
Belgium}}

\maketitle

\begin{abstract}
In previous work the framework for a hypercomplex function theory in superspace was established and amply investigated. In this paper a Cauchy integral formula is obtained in this new framework by exploiting techniques from orthogonal  Clifford analysis. After introducing Clifford algebra valued surface- and volume-elements first a purely fermionic Cauchy formula is proven. Combining this formula with the already well-known bosonic Cauchy formula yields the general case. Here the integration over the boundary of a supermanifold is an integration over as well the even as the odd boundary (in a formal way). Finally, some additional results such as a Cauchy-Pompeiu formula and a representation formula for monogenic functions are proven.
\end{abstract}

\textbf{MSC 2000 :}   30G35 (primary), 58C50 (secondary)\\
\noindent
\textbf{Keywords :}   Clifford analysis, superspace, Cauchy formula

\section{Introduction}
Superspaces and more general supermanifolds play an important role in contemporary theoretical physics, e.g. in the study of supersymmetric (gauge), supergravity or superstring theories, in the study of the geometrical meaning of the BRST symmetry, as well as in the theory of random matrices etc.

As to the mathematical part of the theory, several approaches are possible. Superspaces were first introduced by Berezin, see \cite{MR0208930,MR732126}. His approach was deeply influenced by modern algebraic geometry using schemes and sheaf theory. Other important references in this approach are \cite{MR565567,MR0580292}. Later, supermanifold theory was also studied from the point of view of differential geometry (which perhaps ties in better with a physical way of thinking). We refer the reader to \cite{MR778559,MR574696} and the book \cite{MR1175751} for a general overview. Moreover, both approaches are equivalent in the categorical sense, as was shown in \cite{MR554332}.

It is however possible to study superspaces from yet another point of view, namely that of harmonic analysis and hypercomplex function theory (as a refinement of harmonic analysis). In a set of recent papers we have extended the theory of Clifford analysis (see e.g. \cite{MR697564,MR1130821,MR1169463}) to superspace in a canonical way. In \cite{DBS1,DBS4} we have established the computational framework, introducing the basic symbols such as variables and Clifford numbers, and also the basic differential operators (Dirac and Laplace operators, Euler and Gamma operators, etc.).

Important in this new approach to superspace is the introduction of the so-called super-dimension $M$, which gives a global characterization of super Euclidean spaces; it is defined by the action of the Dirac operator on the vector variable. A lot of results in superspace may be found by simply replacing the classical dimension by this super-dimension in special formulae. This technique was used in e.g. \cite{DBS5}, where we studied spherical harmonics in superspace and introduced an integral over the supersphere (based on an old result of Pizzetti). Moreover, this integral was extended to the whole superspace by a generalized form of integration in spherical co-ordinates and it was proven that this yields the same result as the well-known Berezin integral (see e.g. \cite{MR0208930,MR732126}). 

One of the most interesting features of Clifford analysis is that it allows for the construction of several nice Cauchy-type formulae in higher dimensions (see e.g. \cite{JR3,MR1012510,JR2}). The aim of the present paper is hence to show that one can also obtain a generalization to superspace of Cauchy's integral formula.
This is an important result, because it is more or less equivalent to asking for a formula of Stokes in superspace, connecting integration over a supermanifold with integration over its boundary. 

Note that there are versions of Stokes' formula known for supermanifolds, see e.g. the
work of Palamodov in \cite{MR1402921}. In that approach however, rather complicated machinery
of algebraic geometry is used. Our aim is to use instead the framework of hypercomplex analysis to obtain a Cauchy formula in a more straightforward way. The advantage of this Clifford analysis framework is that it will allow us to predict the form of the desired formula by analogy with the classical case. Moreover we will obtain that the boundary of a supermanifold consists of two parts, which can be interpreted as the even and the odd boundary.

As a consequence of this integral formula, we will be able to construct a Cauchy-Pompeiu formula and a Cauchy representation formula for monogenic functions in superspace (i.e. null-solutions of the super Dirac operator), although not all nice properties from the complex plane will be preserved (such as e.g. Morera's theorem).

The paper is organized as follows. We start with introducing the basic operators and function spaces in section \ref{preliminaries}. In section \ref{limitcases} we first briefly discuss the Clifford analysis version of Stokes'  formula in $\mR^m$. This will provide us with the necessary ideas to construct a similar formula in purely fermionic space (i.e. the case where only anti-commuting co-ordinates are considered). In section \ref{generalthm} the general case is considered, which necessitates the construction of an appropriate surface-element and a corresponding volume-element. Finally, in section \ref{conseq} a few corollaries to this result, such as a Cauchy theorem, are discussed.

\section{Preliminaries}
\label{preliminaries}

The basic algebra of interest in the study of Clifford analysis in superspace (see \cite{DBS1,DBS4}) is the real algebra $\cP = \mbox{Alg}(x_i, e_i; {x \grave{}}_j,{e \grave{}}_j)$, $i=1,\ldots,m$, $j=1,\ldots,2n$
generated by

\begin{itemize}
\item $m$ commuting variables $x_i$ and $m$ orthogonal Clifford generators $e_i$
\item $2n$ anti-commuting variables ${x \grave{}}_i$ and $2n$ symplectic Clifford generators ${e \grave{}}_i$
\end{itemize}
subject to the multiplication relations
\[ \left \{
\begin{array}{l} 
x_i x_j =  x_j x_i\\
{x \grave{}}_i {x \grave{}}_j =  - {x \grave{}}_j {x \grave{}}_i\\
x_i {x \grave{}}_j =  {x \grave{}}_j x_i\\
\end{array} \right .
\quad \mbox{and} \quad
\left \{ \begin{array}{l}
e_j e_k + e_k e_j = -2 \delta_{jk}\\
{e \grave{}}_{2j} {e \grave{}}_{2k} -{e \grave{}}_{2k} {e \grave{}}_{2j}=0\\
{e \grave{}}_{2j-1} {e \grave{}}_{2k-1} -{e \grave{}}_{2k-1} {e \grave{}}_{2j-1}=0\\
{e \grave{}}_{2j-1} {e \grave{}}_{2k} -{e \grave{}}_{2k} {e \grave{}}_{2j-1}=\delta_{jk}\\
e_j {e \grave{}}_{k} +{e \grave{}}_{k} e_j = 0\\
\end{array} \right .
\]
and where moreover all elements $e_i$, ${e \grave{}}_j$ commute with all elements $x_i$, ${x \grave{}}_j$.

\noindent
If we denote by $\Lambda_{2n}$ the Grassmann algebra generated by the anti-commuting variables ${x \grave{}}_j$ and by $\cC$ the algebra generated by all the Clifford numbers $e_i, {e \grave{}}_j$, then we clearly have that
\[
\cP = \mR[x_1,\ldots,x_m]\otimes \Lambda_{2n} \otimes \cC.
\]

In the case where $n = 0$ we have that $\cC \cong \cC l_{0,m}$, the standard orthogonal Clifford algebra with signature $(-1,\ldots,-1)$. Similarly, the algebra generated by the ${e \grave{}}_j$ is isomorphic with the Weyl algebra over a vectorspace of dimension $2n$ equipped with the canonical symplectic form.

The most important element of the algebra $\cP$ is the vector variable $x = \ux+\uxb$ with
\[
\begin{array}{lll}
\ux &=& \sum_{i=1}^m x_i e_i\\
&& \vspace{-2mm}\\
\uxb &=& \sum_{j=1}^{2n} {x \grave{}}_{j} {e \grave{}}_{j}.
\end{array}
\]

One easily calculates that
\[
x^2 = \uxb^2 +\ux^2 = \sum_{j=1}^n {x\grave{}}_{2j-1} {x\grave{}}_{2j}  -  \sum_{j=1}^m x_j^2.
\]
The super Dirac operator is defined by
\begin{equation}
\px = \upxb-\upx = 2 \sum_{j=1}^{n} \left( {e \grave{}}_{2j} \partial_{{x\grave{}}_{2j-1}} - {e \grave{}}_{2j-1} \partial_{{x\grave{}}_{2j}}  \right)-\sum_{j=1}^m e_j \pj.
\label{leftDirac}
\end{equation}
If we let it act from the right, we have to introduce an extra minus sign (see \cite{DBS1})
\begin{equation}
\cdot \px = - \cdot \upxb - \cdot \upx.
\label{rightDirac}
\end{equation}

The square of the Dirac operator is the super Laplace operator:
\[
\Delta = \px^2 =4 \sum_{j=1}^n \partial_{{x \grave{}}_{2j-1}} \partial_{{x \grave{}}_{2j}} -\sum_{j=1}^{m} \pj^2.
\]
The bosonic part of this operator is $\Delta_b = -\sum_{j=1}^{m} \pj^2$, which is the classical Laplace operator. The fermionic part is $\Delta_f = 4 \sum_{j=1}^n \partial_{{x \grave{}}_{2j-1}} \partial_{{x \grave{}}_{2j}}$.

The Euler operator in superspace is defined as
\[
\mE = \sum_{j=1}^m x_j \pj+\sum_{j=1}^{2n} {x \grave{}}_{j} \pjb
\]
and allows us to decompose $\cP$ into spaces of homogeneous $\cC$-valued polynomials
\[
\cP = \bigoplus_{k=0}^{\infty} \cP_k, \quad \cP_k=\left\{ \omega \in \cP \; | \; \mE \omega=k \omega \right\}.
\]

For the other important operators in super Clifford analysis we refer the reader to \cite{DBS1,DBS4}. If we let $\px$ act on $x$ we find that
\[
\px x = x \px = m-2n = M
\]
where $M$ is the so-called super-dimension. This super-dimension is of the utmost importance (see e.g. \cite{DBS5}), as it gives a global characterization of our superspace. The physical meaning of this parameter is discussed in \cite{DBS3}.

The basic calculational rules for the Dirac operator on the algebra $\cP$ are given in the following lemma (see \cite{DBS1}).

\begin{lemma}
Let $s \in \mN$ and $R_k \in \cP_k$, then
\begin{eqnarray*}
\px(x^{2s} R_k) &=& 2 s x^{2s-1}R_k + x^{2s} \px R_k\\
\px(x^{2s+1} R_k) &=& (2k + M + 2s) x^{2s}R_k - x^{2s+1} \px R_k.
\end{eqnarray*}
\end{lemma}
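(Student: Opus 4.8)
The plan is to reduce both identities to a single fundamental anticommutation relation together with the observation that $x^2$ is a central scalar. First I would record (re-deriving it from \cite{DBS1} by splitting into bosonic and fermionic parts) the operator identity
\[ \px x + x \px = 2\mE + M, \]
valid on all of $\cP$, where $x$ denotes left multiplication by the vector variable. The bosonic contribution $-\upx\,\ux - \ux\,\upx$ equals $m + 2\sum_j x_j \pj$ and the fermionic contribution $\upxb\,\uxb + \uxb\,\upxb$ equals $-2n + 2\sum_j {x \grave{}}_{j}\,\pjb$, and these add up to exactly $2\mE + M$; this is the operator refinement of the already stated $\px x = M$. Since $R_k \in \cP_k$ satisfies $\mE R_k = k R_k$, the identity immediately yields the base case $\px(x R_k) = (2k+M)R_k - x\px R_k$, which is the second formula for $s=0$.

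Next I would isolate two computational facts. Because $x^2 = \sum_{j} {x \grave{}}_{2j-1}{x \grave{}}_{2j} - \sum_j x_j^2$ is a Grassmann-even scalar polynomial, it commutes with every Clifford generator and with every variable, hence with all of $\cP$; in particular $x^{2s} = (x^2)^s$ acts as a central multiplier. A direct differentiation gives $\px x^2 = 2\ux + 2\uxb = 2x$ (the bosonic derivative producing $2\ux$, the fermionic one $2\uxb$), and since $x^2$ is central the chain rule upgrades this to $\px x^{2s} = 2s\, x^{2s-1}$. The second fact is a Leibniz splitting: writing $\px = \sum_A f_A \partial_A$ with $f_A$ Clifford generators and $\partial_A$ genuine derivations, one has $\px(f R) = (\px f)R + f(\px R)$ for any scalar $f$, because the $f_A$ commute with every variable and hence with $f$ and with $\partial_A f$, independently of Grassmann parity.

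Applying the Leibniz splitting with $f = x^{2s}$ and $R = R_k$, and substituting $\px x^{2s} = 2s\, x^{2s-1}$, gives the first formula at once. For the second formula I would write $x^{2s+1}R_k = x^{2s}(x R_k)$ and note that $x R_k \in \cP_{k+1}$, since $x$ raises the Euler degree by one. Applying the first formula to $x^{2s}(x R_k)$ produces $2s\,x^{2s}R_k + x^{2s}\,\px(x R_k)$, and inserting the base case $\px(x R_k) = (2k+M)R_k - x\px R_k$ collapses the expression to $(2k+M+2s)\,x^{2s}R_k - x^{2s+1}\px R_k$, as claimed.

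The main obstacle is the noncommutativity between the Clifford-valued operator $\px$ and the vector variable $x$: one cannot naively apply a Leibniz rule to $\px(x^{2s+1}R_k)$, since the Clifford generators inside $\px$ fail to commute past $x$. The resolution, and the conceptual heart of the argument, is that all of this noncommutativity is confined to a single factor of $x$ — every even power $x^{2s}$ is central — so that the awkward commutator is handled once and for all by the anticommutation relation $\px x + x\px = 2\mE + M$. The only remaining care needed is the bookkeeping of signs and Grassmann parities in the fermionic derivatives when verifying $\px x^2 = 2x$ and the anticommutator.
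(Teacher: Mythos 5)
The paper does not actually prove this lemma itself (it is quoted from \cite{DBS1}), but your argument is correct and is essentially the standard derivation used there: the anticommutation relation $\px x + x\px = 2\mE + M$, the centrality of the scalar $x^2$, and a Leibniz splitting. One small caveat: your general claim $\px(fR) = (\px f)R + f(\px R)$ for \emph{any} scalar $f$ is an overstatement, since the fermionic antiderivations in $\px$ contribute a sign $(-1)^{|f|}$ on the second term when $f$ is Grassmann-odd; however, you only ever apply it with $f = x^{2s}$, which is Grassmann-even, so the proof stands as written.
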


If we define the space of spherical monogenics of degree $k$ by
\[
\cM_k =\left\{ R \in \cP_k \; | \; \px R = 0 \right\}
\]
we immediately have
\begin{corollary}
Let $s \in \mN$ and $P_k \in \cM_k$, then
\begin{eqnarray*}
\px(x^{2s} P_k) &=& 2 s x^{2s-1}P_k\\
\px(x^{2s+1} P_k) &=& (2k + M + 2s) x^{2s}P_k.
\end{eqnarray*}
\label{basicrel}
\end{corollary}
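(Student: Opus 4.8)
The plan is to obtain this statement as an immediate specialization of the preceding Lemma, so that no independent argument is needed. The key point is that the space of spherical monogenics $\cM_k$ is by definition a subset of $\cP_k$: every $P_k \in \cM_k$ is in particular a homogeneous $\cC$-valued polynomial of degree $k$, and hence qualifies as an admissible $R_k$ in the Lemma. I would therefore simply set $R_k = P_k$ in both identities of the Lemma.

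Once this substitution is made, the second defining property of $\cM_k$, namely $\px P_k = 0$, does all the work. In the first identity $\px(x^{2s} R_k) = 2s\, x^{2s-1} R_k + x^{2s} \px R_k$ the term $x^{2s} \px R_k$ vanishes, leaving $\px(x^{2s} P_k) = 2s\, x^{2s-1} P_k$. In the second identity $\px(x^{2s+1} R_k) = (2k + M + 2s)\, x^{2s} R_k - x^{2s+1} \px R_k$ the term $-x^{2s+1} \px R_k$ vanishes, leaving $\px(x^{2s+1} P_k) = (2k + M + 2s)\, x^{2s} P_k$. These are precisely the two asserted formulas.

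There is essentially no obstacle here; the only things worth checking are bookkeeping. First, one must confirm that the degree label $k$ appearing in the coefficient $2k + M + 2s$ is unaffected by the specialization, which is clear since $P_k$ carries the same homogeneity degree $k$ as the general $R_k$. Second, one should note that monogenicity of $P_k$ enters only through the single relation $\px P_k = 0$ and nowhere else, so the argument applies for every $s \in \mN$ uniformly. Hence the corollary follows directly, and the statement is legitimately recorded as an immediate consequence of the Lemma.
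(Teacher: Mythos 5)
Your proposal is correct and matches the paper exactly: the corollary is stated as an immediate consequence of the preceding Lemma, obtained by taking $R_k = P_k$ and using $\px P_k = 0$ to kill the extra terms. Nothing further is needed.
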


We also have the following lemma (see \cite{DBS5}).

\begin{lemma}
If $R_{2t} \in \cP_{2t}$, then the following holds:
\[
\Delta^{t+1}(x^2 R_{2t}) = 4(t+1)(M/2+t) \Delta^{t}( R_{2t}).
\]
\end{lemma}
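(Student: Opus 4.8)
The plan is to reduce everything to a single commutation identity between $\Delta = \px^2$ and multiplication by $x^2$, and then iterate by induction on $t$.

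First I would establish, for any homogeneous $R_j \in \cP_j$, the identity
\[
\Delta(x^2 R_j) = x^2 \Delta R_j + (4j + 2M)\, R_j .
\]
This follows by applying the super Dirac operator twice and invoking the two basic calculus rules of the first lemma. Concretely, writing $\px(x^2 R_j) = 2 x R_j + x^2 \px R_j$ (the even rule with $s=1$), I would apply $\px$ once more to each summand: to $2xR_j$ through the odd rule $\px(x R_j) = (2j+M)R_j - x\px R_j$, and to $x^2 \px R_j$ (noting $\px R_j \in \cP_{j-1}$) through the even rule $\px(x^2 \px R_j) = 2 x \px R_j + x^2 \px^2 R_j$. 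The two cross terms $\mp 2x\px R_j$ then cancel, leaving exactly $x^2 \Delta R_j + (4j+2M)R_j$. Verifying this cancellation, together with correctly tracking which homogeneity degree is fed into each rule and keeping the sign in the odd rule straight, is the main (and essentially the only) delicate point; everything afterwards is bookkeeping.

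With this identity in hand I would prove the lemma by induction on $t$. For $t=0$ one has $\Delta R_0 = 0$, so the identity gives $\Delta(x^2 R_0) = 2M R_0 = 4\cdot 1 \cdot (M/2)\, R_0$, as claimed. For the inductive step, assume the statement for $t-1$ applied to elements of $\cP_{2t-2}$. Given $R_{2t} \in \cP_{2t}$, apply the identity once and then $\Delta^t$:
\[
\Delta^{t+1}(x^2 R_{2t}) = \Delta^t\big(x^2 \Delta R_{2t}\big) + (8t + 2M)\,\Delta^t R_{2t}.
\]
Since $\Delta$ lowers the polynomial degree by two, $\Delta R_{2t} \in \cP_{2t-2}$, so the induction hypothesis applies to the first term and produces $4t(M/2 + t -1)\,\Delta^{t-1}(\Delta R_{2t}) = 4t(M/2+t-1)\,\Delta^t R_{2t}$. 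Collecting coefficients gives
\[
\Delta^{t+1}(x^2 R_{2t}) = \big[\,4t(M/2+t-1) + 8t + 2M\,\big]\Delta^t R_{2t},
\]
and a short computation shows the bracket equals $4(t+1)(M/2+t)$, completing the induction.

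Alternatively one can avoid the induction by commuting the full power $\Delta^{t+1}$ past $x^2$ in one sweep: repeated use of the identity yields $\Delta^{p}(x^2 R_{2t}) = x^2 \Delta^p R_{2t} + \beta_p\, \Delta^{p-1}R_{2t}$ with $\beta_{p+1} = \beta_p + (8t - 8p + 2M)$ and $\beta_0 = 0$. Taking $p = t+1$, the leading term drops out because $\Delta^{t+1}R_{2t} \in \cP_{-2} = \{0\}$, and summing the telescoping recursion gives $\beta_{t+1} = 2(t+1)(2t+M) = 4(t+1)(M/2+t)$. Both routes hinge on the same commutation identity; I would present the inductive version as the cleaner write-up.
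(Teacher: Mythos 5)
Your proof is correct. Note that the paper itself does not prove this lemma --- it simply cites it from an earlier work (\emph{Spherical harmonics and integration in superspace}), so there is no in-text argument to compare against; your write-up is in effect a self-contained derivation from the only ingredient the paper does supply, namely the two calculus rules $\px(x^{2s}R_k) = 2sx^{2s-1}R_k + x^{2s}\px R_k$ and $\px(x^{2s+1}R_k) = (2k+M+2s)x^{2s}R_k - x^{2s+1}\px R_k$. Your key commutation identity $\Delta(x^2R_j) = x^2\Delta R_j + (4j+2M)R_j$ checks out: the two cross terms $-2x\px R_j$ and $+2x\px R_j$ do cancel, and you correctly feed the degree $j-1$ of $\px R_j$ into the even rule. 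The base case, the coefficient bookkeeping $4t(M/2+t-1)+8t+2M = 4(t+1)(M/2+t)$, and the alternative telescoping route (including the observation that $\Delta^{t+1}R_{2t}=0$ by degree counting) are all correct. This is exactly the kind of argument one would expect behind the cited result.
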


From this formula we derive the following important result. Let $R_{2n-2k} \in \cP_{2n-2k}$ where the purely fermionic case ($m=0$, $M=-2n$) is considered. Then 
\begin{eqnarray*}
\upxb^{2n}(\uxb^{2k} R_{2n-2k}) &=& 4n (-n+n-1) \upxb^{2n-2}(\uxb^{2k-2} R_{2n-2k})\\
&=& 4n (-1) 4(n-1)(-n+n-2) \upxb^{2n-4}(\uxb^{2k-4} R_{2n-2k})\\
&=& 4^2 n(n-1) (-1)(-2) \upxb^{2n-4}(\uxb^{2k-4} R_{2n-2k})\\
&=& \ldots\\
&=&4^k n (n-1) \ldots (n-k+1) (-1) (-2) \ldots (-k) \upxb^{2n-2k}( R_{2n-2k})\\
&=& (-1)^k 4^k \frac{n! k! }{(n-k)!} \upxb^{2n-2k}( R_{2n-2k}).
\end{eqnarray*}
We define for further use the numerical coefficient
\[
c(n,k) = (-1)^k 4^k \frac{n! k! }{(n-k)!}.
\]

For the treatment of a Cauchy formula in superspace we need of course a broader set of functions. For our purposes we define the function spaces
\[
\cF(\Omega)_{m|2n} = \cF(\Omega) \otimes \Lambda_{2n} \otimes \cC
\]
where $\Omega$ is an open set in $\mR^m$ and where $\cF(\Omega)$ stands for $\hD(\Omega)$, $C^{k}(\Omega)$, $L_{p}(\Omega)$, $L_{1}^{\mbox{\footnotesize loc}}(\Omega)$, $\ldots$ according to the application. We denote by $\cM(\Omega)_{m|2n}^{l(r)} \subset 
C^{1}(\Omega)_{m|2n}$ the space of left (respectively right) monogenic functions, i.e. null-solutions of the super Dirac operator. 

The fundamental solution of the super Laplace operator is given by 
\begin{equation}
\nu_2^{m|2n} = \sum_{k=0}^n \frac{4^k k!}{(n-k)!} \nu_{2k+2}^{m|0} \uxb^{2n-2k},
\label{fundlapl}
\end{equation}
where the functions $\nu_{i}^{m|0}$ are fundamental solutions of the powers of the bosonic Laplace operator $\Delta_b$ (see e.g. \cite{MR745128} for some explicit expressions), satisfying
\begin{eqnarray*}
\Delta^j_b \nu_{2l}^{m|0} &=& \nu_{2l-2j}^{m|0}, \quad j<l\\
\Delta^l_b \nu_{2l}^{m|0} &=& \delta(\ux).
\end{eqnarray*}
This fundamental solution can be determined either by direct methods (see \cite{DBS6}) or by Fourier transform (see \cite{DBS9}).

Letting the super Dirac operator act on the left of (\ref{fundlapl}) (acting on the right yields the same result) gives us the left (and right) fundamental solution of the Dirac operator in superspace:
\begin{eqnarray}
\nu_{1}^{m|2n} &=& \sum_{k=0}^{n-1} 2 \frac{4^k k!}{(n-k-1)!} \nu_{2k+2}^{m|0} \uxb^{2n-2k-1} + \sum_{k=0}^n \frac{4^k k! }{(n-k)!} \nu_{2k+1}^{m|0} \uxb^{2n-2k}
\end{eqnarray}
with $\nu_{2k+1}^{m|0} = -\upx \nu_{2k+2}^{m|0} = -\nu_{2k+2}^{m|0} \upx$. Note that there does not exist a fundamental solution in the purely fermionic case (see \cite{DBS6}).  

Integration over superspace is given by the so-called Berezin integral (see \cite{MR0208930,MR732126}), defined by
\[
\int_{\mR^{m|2n}} f = \int_{\mR^m} \int_B f, \qquad f \in \hD(\Omega)_{m|2n}
\]
with
\[
\int_B = \partial_{{x \grave{}}_{2n}} \ldots \partial_{{x \grave{}}_{1}} = \frac{(-1)^n}{4^n n!} \upxb^{2n}.
\]
This definition means that one first has to derive $f$ with respect to all anti-commuting variables and then to integrate the commuting variables in the usual way. This integration recipe may seem rather haphazard but can be explained in a satisfactory way using harmonic analysis in superspace (see \cite{DBS5}).

\section{The limit cases}
\label{limitcases}

In this section we discuss the Clifford analysis versions of Stokes' theorem in the two limit cases, namely the purely bosonic case where one considers only commuting variables ($m \neq 0$, $n=0$) and the purely fermionic case where only anti-commuting variables are considered ($m = 0$, $n \neq 0$). We start with the bosonic case.

\subsection{The bosonic Stokes' theorem}

Let $\Omega$ be an open set in $\mR^m$, $\Sigma$ a compact oriented differentiable $m$-dimensional manifold in $\Omega$ and $\partial \Sigma$ its smooth boundary. Then by introducing the following vector-valued surface-element 
\[
\ds = \sum_{j=1}^m (-1)^{j+1} e_j \widehat{dx_j}, \quad \qquad \widehat{dx_j} = dx_1 \ldots dx_{j-1} dx_{j+1}\ldots dx_m
\]
and the volume-element
\[
dV(\ux) = dx_1 \ldots dx_m,
\]
where the exterior product of differential forms is understood, we have the following (classical) theorem (see e.g. \cite{MR697564}).

\begin{theorem}[(Bosonic Stokes' theorem)]
Let $f$ and $g$ be $C^1$-functions defined on $\Omega$ with values in $\cC l_{0,m}$. Then one has
\[
\int_{\partial \Sigma} f \ds \, g = \int_{\Sigma} [(f \upx)g + f (\upx g)] dV(\ux).
\]
\end{theorem}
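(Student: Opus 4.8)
The plan is to prove this by reducing the Clifford-algebra identity to the classical (scalar) Stokes/divergence theorem, component by component. The key observation is that the surface-element $\ds$ and volume-element $dV(\ux)$ are built precisely so that the Clifford multiplication absorbs the combinatorial signs appearing in the exterior calculus of differential forms.

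First I would expand the boundary integrand $f \ds \, g$ using the definition $\ds = \sum_{j=1}^m (-1)^{j+1} e_j \widehat{dx_j}$, obtaining $\int_{\partial \Sigma} f \ds \, g = \sum_{j=1}^m (-1)^{j+1} \int_{\partial \Sigma} f e_j g \, \widehat{dx_j}$. Each summand is now an integral of an ordinary (Clifford-valued, but with scalar coefficient functions) differential $(m-1)$-form over the boundary $\partial \Sigma$. To each I would apply the classical Stokes' theorem $\int_{\partial \Sigma} \alpha = \int_{\Sigma} d\alpha$. The exterior derivative of $f e_j g \, \widehat{dx_j}$ picks out only the $dx_j$-component of $d(f e_j g)$, and the wedge $dx_j \wedge \widehat{dx_j} = (-1)^{j-1} dV(\ux)$ exactly cancels the sign $(-1)^{j+1}$, leaving $\int_{\Sigma} \pj(f e_j g)\, dV(\ux)$.

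Summing over $j$ then gives $\int_{\partial \Sigma} f \ds \, g = \sum_{j=1}^m \int_{\Sigma} \pj(f) e_j g + f e_j \pj(g) \, dV(\ux)$, where I have used the Leibniz rule and the fact that the scalar partials $\pj$ commute with the constant generators $e_j$. The final step is to recognize these two sums as the right- and left-actions of the Dirac operator. Since $e_j$ commutes with the coefficient functions, $\sum_j \pj(f) e_j = \sum_j (\pj f) e_j = f \upx$ (the right action, by the sign convention in (\ref{rightDirac}) restricted to the bosonic part), and $\sum_j f e_j \pj g = f \sum_j e_j \pj g = f(\upx g)$ (the left action). Collecting these yields the claimed expression $\int_{\Sigma}[(f\upx) g + f(\upx g)]\, dV(\ux)$.

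The main obstacle, and the point deserving care, is bookkeeping the signs correctly: tracking the interplay between the factor $(-1)^{j+1}$ in $\ds$, the sign from reordering $dx_j \wedge \widehat{dx_j}$, and the sign conventions distinguishing the left and right Dirac actions in (\ref{leftDirac}) and (\ref{rightDirac}). One must verify that the right-action convention (the extra minus sign relative to the naive $\sum_j e_j \pj$) matches exactly what the calculation produces, so that both terms assemble into the symmetric form above rather than into something with a spurious sign mismatch. Everything else is a routine application of the classical theorem to each scalar component.
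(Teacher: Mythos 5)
Your proposal is correct and is essentially the paper's proof: the authors simply assert the identity $d(f\,\ds\,g)=[(f\upx)g+f(\upx g)]\,dV(\ux)$ and invoke classical Stokes, and you have written out exactly that computation (expansion of $\ds$, the cancellation $(-1)^{j+1}dx_j\wedge\widehat{dx_j}=(-1)^{j+1}(-1)^{j-1}dV(\ux)=dV(\ux)$, and the Leibniz rule). One small clarification: the theorem is phrased in terms of $\upx$ rather than $\px$, so the extra minus sign of (\ref{rightDirac}) never enters here and $f\upx=\sum_j(\pj f)e_j$ directly, as your calculation in fact produces.
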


The proof follows from a direct application of Stokes' theorem, because $d (f \ds \, g) =  [(f \upx)g + f (\upx g)] dV(\ux)$.

\subsection{The fermionic Stokes' theorem}

We want to construct a formula which looks like
\begin{equation}
\int_B f \dsb g =  \int_B  [( - (f \upxb)  g + f ( \upxb  g)  )] dV(\uxb)
\label{fermcauchyform}
\end{equation}
where $dV(\uxb)$ is a suitable volume-element in the purely fermionic superspace and $\dsb$ a corresponding surface-element. 
We first note that in the bosonic case, upon introducing $d \ux = \sum_{i=1}^{m} e_i dx_i$, the surface- and volume-elements can be expressed as follows:
\begin{equation}
\begin{array}{lll}
(d\ux)^m &=& m! \; e_1 \ldots e_m \; dV(\ux)\\
\vspace{-2mm}\\
(d\ux)^{m-1} &=& -(m-1)! \; \ds \; e_1 \ldots e_m.
\end{array}
\label{vectordiff}
\end{equation}

As we have shown in our paper \cite{DBS4}, by studying the homology of the super Hodge coderivative $d^*$ on spaces of polynomial valued differential forms, the correct volume-element in fermionic superspace is
\[
{x \grave{}}_{1} \ldots {x \grave{}}_{2n} = \frac{\uxb^{2n}}{n!}.
\]
Comparing this with formula (\ref{vectordiff}) we conclude that a good candidate for $\dsb$ would then be $\uxb^{2n-1}/ (n-1)!$.

Although this approach would indeed yield a Stokes' type formula in superspace, it is in fact a bit meager. Indeed, if we expand $f$ and $g$ into homogeneous components
\begin{eqnarray*}
f &=& f_0 + f_1 + \ldots f_{2n}, \qquad f_i \in \cP_i\\
g &=& g_0 + g_1 + \ldots g_{2n}, \qquad g_i \in \cP_i,
\end{eqnarray*}
we obtain
\[
f \uxb^{2n-1} g = f_0 \uxb^{2n-1} g_1 + f_1 \uxb^{2n-1} g_0,
\]
because there are no polynomials of degree higher than $2n$. In formula (\ref{fermcauchyform}), only the terms $f_0, f_1$ and $g_0, g_1$ of the functions $f$ and $g$ would thus play a role. This can be extended by instead introducing the following definitions:
\[
\dsb = -2 \left( \uxb + \frac{\uxb^3}{1!}+ \frac{\uxb^5}{2!} + \ldots + \frac{\uxb^{2n-1}}{(n-1)!} \right) = -2 \uxb \exp(\uxb^2)
\]
and
\[
dV(\uxb) =\frac{\uxb^2}{1!}+ \frac{\uxb^4}{2!} + \ldots + \frac{\uxb^{2n}}{n!} = \exp(\uxb^2)-1
\]
as will become clear in the sequel. In this way, more components of the functions $f$ and $g$ will contribute to the resulting formula (see theorem \ref{fermcauchythm}).

Let us start with the following technical lemma.
\begin{lemma}
Suppose $f \in \cP_i$, $g \in \cP_j$ with $i+j+1=2n-2k$. Then one has
\[
\upxb^{2n-2k} (f \uxb g ) = 2(n-k) \upxb^{2n-2k-2} \left[- (f \upxb) g + f (\upxb g) \right]. 
\]
\end{lemma}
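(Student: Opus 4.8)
The plan is to establish a one-step graded product rule for $\upxb$ acting on the sandwiched expression $f\uxb g$, and then to iterate it $2n-2k$ times, reading off the numerical coefficients from the fermionic specialisations of Corollary~\ref{basicrel} in the same telescoping manner used above to compute $c(n,k)$. Throughout I would work in the purely fermionic setting $m=0$, so that $x=\uxb$, $\px=\upxb$ and $M=-2n$, and write $p(\cdot)\in\{0,1\}$ for the $\mZ_2$-parity, with $x_i,{e \grave{}}_j$ even and ${x \grave{}}_j,e_i$ odd; in particular $p(\uxb)=1$ and each $\pjb$ is an odd derivation.

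First I would prove the product rule
\[
\upxb(f\uxb g) = (\upxb f)\,\uxb g - (-1)^{p(f)} f\uxb(\upxb g) + T(f,g),
\]
where $T(f,g)$ collects the contraction terms coming from the two places where the derivatives $\pjb$ hit the middle factor $\uxb$ rather than $f$ or $g$. Concretely, expanding $\upxb = 2\sum_{j=1}^n(\fermdirac)$ and applying the odd-derivation Leibniz rule to the triple product, together with $\pjb\,\uxb = {e \grave{}}_j$, produces one family of terms of the shape $\sum b\,f\,{e \grave{}}\,g$ (from $\pjb$ acting on $\uxb$) and one family of commutator terms $\sum [b,f\uxb]\,\pjb g$ (from commuting $f\uxb$ back to the left of the Clifford coefficients $b=\pm2{e \grave{}}$). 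Both families are evaluated by means of the symplectic relations ${e \grave{}}_{2j-1}{e \grave{}}_{2k}-{e \grave{}}_{2k}{e \grave{}}_{2j-1}=\delta_{jk}$; for $f=1$ they collapse to $T(1,g)=(\upxb\uxb)g - \sum[b,\uxb]\pjb g = -2n\,g + 2\mE g$, which already exhibits the super-anticommutator $\upxb\uxb+\uxb\upxb = 2\mE+M$ that will generate the coefficients.

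Next I would iterate. Applying $\upxb^{2n-2k-1}$ to the product rule, the two sandwiched terms $(\upxb f)\uxb g$ and $f\uxb(\upxb g)$ are again of the same type but with one factor differentiated, so they feed back into an induction on $n-k$; the contraction term $T(f,g)$, which no longer contains the middle $\uxb$, is handled directly by Corollary~\ref{basicrel} applied to the outer powers of $\uxb$ (equivalently by repeated use of $\upxb\uxb+\uxb\upxb=2\mE+M$). Collecting the surviving contributions, the constants produced at each stage telescope exactly as in the computation of $c(n,k)$, and the Euler-operator contributions evaluate on the homogeneous components of prescribed degree; the net numerical factor is $2(n-k)$ and the surviving algebraic content is precisely $-(f\upxb)g + f(\upxb g)$ (here $f\upxb$ is the right action, $=-f\px$). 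The base case $n-k=1$, where the identity reads $\upxb^{2}(f\uxb g)=2\,[-(f\upxb)g+f(\upxb g)]$, can be checked directly from the product rule together with $\upxb(\uxb R_k)=(2k+M)R_k-\uxb\upxb R_k$.

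The main obstacle is the sandwiched $\uxb$: because $\uxb$ is neither central nor of definite behaviour against the Clifford generators (it anticommutes with each $e_i$ and has nonzero symplectic commutators $[\uxb,{e \grave{}}_i]=\pm{x \grave{}}_{\cdot}$ with the ${e \grave{}}_i$), the factor $f$ cannot simply be carried past $\upxb$, and a naive Leibniz rule fails. The real work is therefore the careful parity- and symplectic-bookkeeping needed to pin down every sign in the product rule and to show that, after iteration, all the auxiliary contraction and Euler-operator terms cancel or combine to leave only $2(n-k)\,\upxb^{2n-2k-2}[-(f\upxb)g+f(\upxb g)]$. A convenient internal consistency check at each stage is that the specialisation $f=1$ must reproduce the fermionic form of Lemma~1.
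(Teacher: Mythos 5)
Your plan coincides with the paper's in outline (a graded product rule for the sandwiched expression $f\,\uxb\,g$, then an induction on $n-k$ with the same base case), and the pieces you do verify are sound: the one-step rule with its contraction terms, and the check $T(1,g)=(2\mE+M)g$. The gap is in the iteration, which is where the entire content of the lemma lives. Iterating one factor of $\upxb$ at a time does not mesh with the statement being proved: the lemma concerns $\upxb^{2n-2k}(f\uxb g)$ with $\deg f+\deg g+1=2n-2k$ \emph{even}, and after a single application of $\upxb$ the surviving sandwich terms $(\upxb f)\uxb g$ and $f\uxb(\upxb g)$ have odd total degree, so they are not instances of the induction hypothesis; you would need a companion odd-degree statement you never formulate. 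Worse, $\upxb$ --- unlike $\upxb^{2}=\Delta_f=4\sum_i \partial_{{x \grave{}}_{2i-1}}\partial_{{x \grave{}}_{2i}}$ --- carries the Clifford coefficients ${e \grave{}}_j$, which commute neither with the middle $\uxb$ nor with the Clifford content of $f$, so each single step spawns new commutator debris; this is precisely why the paper distributes two derivatives at a time, so that the operator being pushed through is scalar. Finally, Corollary \ref{basicrel} cannot ``handle the contraction terms directly'': it applies only to $x^{s}P_k$ with $P_k$ \emph{monogenic}, whereas $f$ and $g$ here are arbitrary homogeneous elements of $\cP$, and the relation $\upxb\uxb+\uxb\upxb=2\mE+M$ only controls $\uxb$ sitting at the edge of a word, not in the middle.

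The missing idea is the self-referential use of the induction hypothesis. When $\Delta_f$ is applied once to $f\uxb g$ by Leibniz, the terms beyond the target $-2(f\upxb)g+2f(\upxb g)$ are $(\upxb^{2}f)\uxb g$, $f\uxb(\upxb^{2}g)$ and the cross terms $(\partial_{{x \grave{}}_{2i-1}}f)\uxb(\partial_{{x \grave{}}_{2i}}g)$, $(\partial_{{x \grave{}}_{2i}}f)\uxb(\partial_{{x \grave{}}_{2i-1}}g)$: all of these are \emph{again of sandwich type} with total degree lowered by $2$, so they must be evaluated by the induction hypothesis itself, and the resulting brackets must then be recombined, via the Leibniz rule read backwards, into $\upxb^{2}$ applied to $-(f\upxb)g+f(\upxb g)$; it is this recombination that yields $2+2(n-l)=2(n-l+1)$. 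Your assertion that ``the constants telescope exactly as in the computation of $c(n,k)$'' does not transfer, because that computation concerns $\upxb^{2n}(\uxb^{2k}R)$ with the central scalar $\uxb^{2}$ and no sandwiching. Without this mechanism the induction does not close, so as written the proposal is a plausible plan rather than a proof.
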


\begin{proof}
The proof is done by induction on $k$. We first consider the case where $k = n-1$. We then need to prove that
\begin{eqnarray*}
\upxb^{2} (f_1 \uxb g_0 ) &=&  - 2 (f_1 \upxb) g_0\\
\upxb^{2} (f_0 \uxb g_1 ) &=&  2 f_0 (\upxb g_1)
\end{eqnarray*}
with $f_i, g_i \in \cP_i$. We give the proof for the first equation:
\begin{eqnarray*}
\upxb^{2} (f_1 \uxb g_0 ) &=& 4 \sum_{j=0}^{n} \partial_{{x \grave{}}_{2i-1}} \partial_{{x \grave{}}_{2i}} ( f_1 \uxb) g_0\\
&=& 4 \sum_{j=0}^{n} \partial_{{x \grave{}}_{2i-1}}  ( (\partial_{{x \grave{}}_{2i}}f_1) \uxb - f_1 {e \grave{}}_{2i}) g_0\\
&=& 4 \sum_{j=0}^{n} ( (\partial_{{x \grave{}}_{2i}}f_1) {e \grave{}}_{2i-1} - ( \partial_{{x \grave{}}_{2i-1}} f_1) {e \grave{}}_{2i}) g_0\\
&=&4 \sum_{j=0}^{n} ( (f_1 \partial_{{x \grave{}}_{2i}}) {e \grave{}}_{2i-1} - (  f_1\partial_{{x \grave{}}_{2i-1}}) {e \grave{}}_{2i}) g_0\\
&=& - 2 (f_1 \upxb) g_0.
\end{eqnarray*}
The other expression is obtained in a similar way. We proceed by induction. Suppose the lemma is proven for $k = l, \ldots, n-1$, then we prove that it also holds for $k = l-1$. This means that we have to prove that
\[
\upxb^{2n-2l+2} (f \uxb g ) = 2(n-l+1) \upxb^{2n-2l} \left[- (f \upxb) g + f (\upxb g) \right] 
\]
where $f \in \cP_i$, $g \in \cP_j$ with $i+j+1=2n-2l+2$. We do the calculation in the case where $i$ is odd (the other case is similar). Then
\begin{eqnarray*}
&&\upxb^{2n-2l+2} (f \; \uxb \; g )\\
 &=& 4 \upxb^{2n-2k} \sum_{i=1}^n \partial_{{x \grave{}}_{2i-1}} \partial_{{x \grave{}}_{2i}}(f \; \uxb \; g )\\
&=& 4 \upxb^{2n-2l} \sum_{i=1}^n \partial_{{x \grave{}}_{2i-1}} \left( (\partial_{{x \grave{}}_{2i}} f)  \uxb  g - f {e \grave{}}_{2i} g  + f \uxb (\partial_{{x \grave{}}_{2i}}g) \right )\\
&=& 4 \upxb^{2n-2l}  \sum_{i=1}^n \left( (\partial_{{x \grave{}}_{2i-1}} \partial_{{x \grave{}}_{2i}} f)  \uxb  g + (\partial_{{x \grave{}}_{2i}} f)  {e \grave{}}_{2i-1}  g - (\partial_{{x \grave{}}_{2i}} f)  \uxb  (\partial_{{x \grave{}}_{2i-1}} g) \right.\\
&& -   (\partial_{{x \grave{}}_{2i-1}} f) {e \grave{}}_{2i} g + f {e \grave{}}_{2i} (\partial_{{x \grave{}}_{2i-1}} g) + (\partial_{{x \grave{}}_{2i-1}} f) \uxb (\partial_{{x \grave{}}_{2i}}g)\\
&& \left. - f {e \grave{}}_{2i-1} (\partial_{{x \grave{}}_{2i}}g) +f \uxb (\partial_{{x \grave{}}_{2i-1}} \partial_{{x \grave{}}_{2i}}g) \right)\\
&=& 4 \upxb^{2n-2l}  \sum_{i=1}^n \left( (\partial_{{x \grave{}}_{2i-1}} \partial_{{x \grave{}}_{2i}} f)  \uxb  g + ( f \partial_{{x \grave{}}_{2i}})  {e \grave{}}_{2i-1}  g - (\partial_{{x \grave{}}_{2i}} f)  \uxb  (\partial_{{x \grave{}}_{2i-1}} g) \right.\\
&& -   ( f \partial_{{x \grave{}}_{2i-1}}) {e \grave{}}_{2i} g + f {e \grave{}}_{2i} (\partial_{{x \grave{}}_{2i-1}} g) + (\partial_{{x \grave{}}_{2i-1}} f) \uxb (\partial_{{x \grave{}}_{2i}}g)\\
&& \left. - f {e \grave{}}_{2i-1} (\partial_{{x \grave{}}_{2i}}g) +f \uxb (\partial_{{x \grave{}}_{2i-1}} \partial_{{x \grave{}}_{2i}}g) \right)\\
&=&\upxb^{2n-2l} \left[ - 2 (f \upxb) g + 2f (\upxb g) \right]\\
&&+  \upxb^{2n-2l} \left( (\upxb^2 f) \uxb g +f \uxb (\upxb^2 g) + 4 \sum_i (\partial_{{x \grave{}}_{2i-1}} f) \uxb (\partial_{{x \grave{}}_{2i}}g)- 4 \sum_i (\partial_{{x \grave{}}_{2i}} f)  \uxb  (\partial_{{x \grave{}}_{2i-1}} g)\right)
\end{eqnarray*}
where we have used the fact that for $F_i \in \cP_i$
\begin{eqnarray*}
\partial_{{x \grave{}}_{k}} F_i &=& F_i \partial_{{x \grave{}}_{k}} \quad \mbox{if $i$ is odd}\\
&=& -F_i \partial_{{x \grave{}}_{k}} \quad \mbox{if $i$ is even}.
\end{eqnarray*}
We can now apply the induction hypothesis to the last line of the previous calculation. This yields
\begin{eqnarray*}
&&\upxb^{2n-2l} \left( (\upxb^2 f) \uxb g +f \uxb (\upxb^2 g) + 4 \sum_i (\partial_{{x \grave{}}_{2i-1}} f) \uxb (\partial_{{x \grave{}}_{2i}}g)- 4 \sum_i (\partial_{{x \grave{}}_{2i}} f)  \uxb  (\partial_{{x \grave{}}_{2i-1}} g)\right)\\
&=&2(n-l)\upxb^{2n-2l-2} \left( -((\upxb^2 f)\upxb)  g +(\upxb^2 f) \upxb g   -(f \upxb) (\upxb^2 g) + f (\upxb^3 g) \right.\\
&& + 4 \sum_i [-((\partial_{{x \grave{}}_{2i-1}} f) \upxb) (\partial_{{x \grave{}}_{2i}}g) + (\partial_{{x \grave{}}_{2i-1}} f) \upxb (\partial_{{x \grave{}}_{2i}}g)]\\
&&\left. + 4 \sum_i [((\partial_{{x \grave{}}_{2i}} f)  \upxb)  (\partial_{{x \grave{}}_{2i-1}} g) - (\partial_{{x \grave{}}_{2i}} f)  \upxb (\partial_{{x \grave{}}_{2i-1}} g)]\right)\\
&=&2(n-l)\upxb^{2n-2l-2} \left( -(\upxb^2 (f\upxb))  g +(\upxb^2 f) \upxb g   -(f \upxb) (\upxb^2 g) + f (\upxb^2 \upxb g) \right.\\
&& - 4 \sum_i [(\partial_{{x \grave{}}_{2i-1}}( f \upxb)) (\partial_{{x \grave{}}_{2i}}g) + (\partial_{{x \grave{}}_{2i-1}} f)  (\partial_{{x \grave{}}_{2i}} \upxb g)]\\
&&\left. + 4 \sum_i [(\partial_{{x \grave{}}_{2i}} (f  \upxb))  (\partial_{{x \grave{}}_{2i-1}} g) + (\partial_{{x \grave{}}_{2i}} f)   (\partial_{{x \grave{}}_{2i-1}} \upxb g)] \right)\\
&=&2(n-l)\upxb^{2n-2l} \left[ - (f \upxb) g + f (\upxb g) \right].
\end{eqnarray*}
We conclude that
\begin{eqnarray*}
\upxb^{2n-2l+2} (f \; \uxb \; g ) &=& 2\upxb^{2n-2l} \left[ -  (f \upxb) g + f (\upxb g) \right]\\
&& + 2(n-l)\upxb^{2n-2l} \left[ - (f \upxb) g + f (\upxb g) \right]\\
&=& 2(n-l+1)\upxb^{2n-2l} \left[ - (f \upxb) g + f (\upxb g) \right]
\end{eqnarray*}
which completes the proof of the lemma.
\end{proof}

Using this lemma, we now calculate
\begin{eqnarray*}
&&\upxb^{2n} (f \uxb^{2k+1} g ) \\
&=& \sum_{i+j+2k+1=2n} \upxb^{2n} (f_i \uxb^{2k+1} g_{j} ) \\
&=& \sum_{i+j+2k+1=2n} c(n,k) \upxb^{2n-2k} (f_i \uxb g_j )\\
&=& \sum_{i+j+2k+1=2n} c(n,k) 2(n-k)\upxb^{2n-2k-2} \left[ - (f_i \upxb) g_j + f_i (\upxb g_j) \right]\\
&=& \sum_{i+j+2k+1=2n} 2(n-k) \frac{c(n,k)}{c(n,k+1)} \upxb^{2n} (\left[ - (f_i \upxb) g_j + f_i (\upxb g_j) \right]\uxb^{2k+2})\\
&=& 2(n-k) \frac{c(n,k)}{c(n,k+1)} \upxb^{2n} (\left[ - (f \upxb) g + f (\upxb g) \right]\uxb^{2k+2})
\end{eqnarray*}
where $f_i$ and $g_j$ are the homogeneous components of $f$ and $g$.

As 
\[
\frac{c(n,k)}{c(n,k+1)} = -\frac{1}{4(k+1) (n-k)}
\]
we conclude that
\[
-2\upxb^{2n} (f \frac{\uxb^{2k+1}}{k!} g ) = \upxb^{2n} (\left[ - (f \upxb) g + f (\upxb g) \right] \frac{\uxb^{2k+2}}{(k+1)!}).
\]

If we combine this result with the definitions of $\dsb$ and $dV(\uxb)$ we immediately obtain the fermionic Stokes' theorem:

\begin{theorem}[(Fermionic Stokes' theorem)]
Let $f$ and $g$ be elements of $\cP = \Lambda_{2n} \otimes \cC$. Then one has
\[
\int_B f \dsb g =  \int_B  [ - (f \upxb)  g + f ( \upxb  g)  ] dV(\uxb).
\]
\label{fermcauchythm}
\end{theorem}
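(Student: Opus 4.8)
The plan is to deduce the theorem directly from the per-degree identity
\[
-2\,\upxb^{2n}\!\left(f\,\frac{\uxb^{2k+1}}{k!}\,g\right) = \upxb^{2n}\!\left(\left[-(f\upxb)g+f(\upxb g)\right]\frac{\uxb^{2k+2}}{(k+1)!}\right)
\]
established just above, by substituting the explicit forms of the Berezin integral, the surface-element $\dsb$ and the volume-element $dV(\uxb)$ and then summing over $k$. All of the genuinely non-trivial work --- the technical lemma and its induction on $k$ --- is already done, so what remains is an assembly.

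First I would expand the left-hand side. Using $\int_B = \frac{(-1)^n}{4^n n!}\upxb^{2n}$ together with $\dsb = -2\sum_{k=0}^{n-1}\uxb^{2k+1}/k!$ and the $\mR$-linearity of the operator $\upxb^{2n}$, one obtains
\[
\int_B f \dsb g = \frac{(-1)^n}{4^n n!}\sum_{k=0}^{n-1}\left(-2\,\upxb^{2n}\!\left(f\,\frac{\uxb^{2k+1}}{k!}\,g\right)\right).
\]
Next I would apply the per-degree identity to each summand; since that identity already incorporates the vanishing of all homogeneous components except those with $i+j+2k+1=2n$, summing over $k$ automatically reassembles the full $f$ and $g$ from their homogeneous parts. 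Finally I would reindex by $l=k+1$, which turns the range into $l=1,\ldots,n$, and pull $\upxb^{2n}$ back out of the sum, leaving the factor $\sum_{l=1}^{n}\uxb^{2l}/l! = dV(\uxb)$. This yields
\[
\int_B f \dsb g = \frac{(-1)^n}{4^n n!}\,\upxb^{2n}\!\left(\left[-(f\upxb)g+f(\upxb g)\right]dV(\uxb)\right) = \int_B\left[-(f\upxb)g+f(\upxb g)\right]dV(\uxb),
\]
which is the assertion.

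Since the substance of the argument is the lemma, I do not expect any serious obstacle at the level of the theorem itself; the only point that requires care is the bookkeeping of the index shift. Concretely, one must check that $k\mapsto k+1$ sends the lowest power $\uxb^{1}$ occurring in $\dsb$ to $\uxb^{2}$, so that the constant term is absent (matching the ``$-1$'' in $dV(\uxb)=\exp(\uxb^2)-1$), and sends the top power $\uxb^{2n-1}$ to $\uxb^{2n}$; in generating-function terms this is precisely the statement that the identity intertwines $-2\uxb\exp(\uxb^2)$ with $\exp(\uxb^2)-1$. Everything is organised as the single operator $\upxb^{2n}$ acting on fully assembled polynomials, so the non-commutativity of $\uxb$ with the $\partial_{{x \grave{}}_{k}}$ --- the one genuine difficulty --- never resurfaces, having already been absorbed into the lemma.
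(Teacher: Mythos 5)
Your proposal is correct and is essentially the paper's own argument: the paper likewise obtains the theorem by summing the displayed per-degree identity over $k=0,\ldots,n-1$ and matching the resulting sums against the definitions $\dsb=-2\sum_{k=0}^{n-1}\uxb^{2k+1}/k!$ and $dV(\uxb)=\sum_{l=1}^{n}\uxb^{2l}/l!$ (the step it compresses into ``we immediately obtain''). Your index bookkeeping, including the reindexing $l=k+1$ and the observation that the homogeneous components reassemble automatically under $\upxb^{2n}$, is accurate; the only substantive content not reproduced is the technical lemma and the $c(n,k)$ computation, which you correctly identify as already established.
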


\begin{remark}
Note that not all properties of integration in the complex plane are still valid in superspace. We do not have e.g. Morera's theorem in the purely fermionic case. Take e.g. $f = \uxb^2 P_1$ with $P_1$ a spherical monogenic of degree one. Then
\[
\int_B \dsb f = 0
\]
but $f$ is clearly not monogenic. This has to do with the fact that we have only one `contour' to be considered in purely fermionic space, whereas in the complex plane one integrates over all contours.
\end{remark}

\section{The general Stokes' theorem in Clifford analysis}
\label{generalthm}

We consider an open set $\Omega \subset \mR^m$ and a compact oriented differentiable $m$-dimensional manifold $\Sigma \subset \Omega$ with smooth boundary $\partial \Sigma$. In the previous section we have introduced two surface-elements $\ds$ and $\dsb$ and two volume-elements $dV(\ux)$ and $dV(\uxb)$. Now we want to combine these elements to obtain a suitable surface-element in superspace. First note that $\ds \dsb$ is not a good candidate, because this would yield an integration over a formal object of codimension two. It turns out that one has to define the surface element on $\Sigma$ as
\[
\dsx = \dsb \; dV(\ux) - dV(\uxb) \ds.
\]

Note that this is a vector in the $e_i$ and ${e \grave{}}_j$:
\[
\dsx = \sum_{j=1}^m (-1)^{j} e_j \widehat{dx_j} dV(\uxb) -2 \sum_{j=1}^{2n} {e \grave{}}_j {x \grave{}}_j \exp(\uxb^2)dV(\ux)
\]
which one would also expect a priori by analogy with the classical case.

The definition of $\dsx $ forces us to define integration of an object such as $f \dsx  g$ in the following way. First we introduce the notations:
\begin{eqnarray*}
\int_{B, \Sigma} &=& \int_{\Sigma} \int_B =  \int_B \int_{\Sigma}\\
\int_{B, \partial \Sigma} &=& \int_{\partial \Sigma} \int_B =  \int_B \int_{\partial \Sigma}
\end{eqnarray*}
then we define 
\begin{equation}
\int_{B, \Sigma, \partial \Sigma} f \dsx  g = \int_{B, \Sigma} f \dsb \; dV(\ux) g - \int_{B, \partial \Sigma} f dV(\uxb) \ds g 
\label{intsurfel}
\end{equation}
where we note that this includes two integrations: one over the whole manifold $\Sigma$ as well as one over the boundary $\partial \Sigma$. This might seem strange, but it is in fact very natural, as the two Berezin integrations $\int_B$ are actually one over the whole fermionic space and one over the fermionic boundary. In this way, both integrations in (\ref{intsurfel}) are formally over the odd and the even boundary of a supermanifold (in the purely bosonic case there is only an even boundary). 

Similarly we have the following general volume-element defined by
\[
d V(x) = d V(\ux) d V(\uxb).
\]
Now we can construct the general Stokes' theorem. The two terms in (\ref{intsurfel}) are calculated as follows, with $f,g \in C^1(\Omega)_{m |2n}$: 
\begin{eqnarray*}
\int_{B, \Sigma} f \dsb \; dV(\ux) g &=& \int_{B, \Sigma} (f \dsb \;  g)dV(\ux)\\
&=&\int_{\Sigma} \left[ \int_B f \dsb \;  g  \right]dV(\ux)\\
&=& \int_{\Sigma}  \left[ \int_B  [ - (f \upxb)  g + f ( \upxb  g)  ] dV(\uxb)  \right]dV(\ux)
\end{eqnarray*}
and
\begin{eqnarray*}
\int_{B, \partial \Sigma} f dV(\uxb) \ds g &=& \int_{B, \partial \Sigma} (f  \ds   g)  dV(\uxb)\\
&=& \int_B \left[ \int_{\partial \Sigma} (f  \ds   g )\right]dV(\uxb)\\
&=& \int_B \left[\int_{\Sigma} [(f \upx)g + f (\upx g) ] dV(\ux) \right] dV(\uxb).
\end{eqnarray*}

So we can calculate
\begin{eqnarray*}
\int_{B, \Sigma, \partial \Sigma} f \dsx  g &=& \int_{B, \Sigma} f \dsb \; dV(\ux) g - \int_{B, \partial \Sigma} f dV(\uxb) \ds g\\ 
&=&\int_{\Sigma}  \left[ \int_B  [ - (f \upxb)  g + f ( \upxb  g)  ] dV(\uxb)  \right]dV(\ux)\\
&& - \int_B \left[\int_{\Sigma} [(f \upx)g + f (\upx g) ] dV(\ux) \right] dV(\uxb)\\
&=& \int_{B,\Sigma}  \left[  - (f \upxb)  g  - (f \upx)g  + f ( \upxb  g) -f (\upx g)   \right] d V(x) \\
&=& \int_{B,\Sigma}  \left[  (f \px)  g + f (\px g)  \right] d V(x),
\end{eqnarray*}
where we have used the definition of the Dirac operator $\px$ acting from the left and from the right (see formulae (\ref{leftDirac}) and (\ref{rightDirac})).

Summarizing we thus obtain the following theorem.

\begin{theorem}[(General Stokes' theorem)]
Let $\Omega \subset \mR^m$ be an open set and $\Sigma \subset \Omega$ a compact oriented differentiable $m$-dimensional manifold with smooth boundary $\partial \Sigma$.
Let $f,g \in C^1(\Omega)_{m |2n}$. Then one has
\[
\int_{B, \Sigma, \partial \Sigma} f \dsx  g = \int_{B,\Sigma}  \left[  (f \px)  g + f (\px g)  \right] d V(x).
\]
\label{gencauchythm}
\end{theorem}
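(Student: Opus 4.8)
The plan is to exploit the additive structure of the superspace surface-element, reducing the statement to the two limit-case theorems already proven. By the very definition of $\dsx$ and of the integral in (\ref{intsurfel}), the left-hand side splits as
\[
\int_{B, \Sigma, \partial \Sigma} f \dsx g = \int_{B, \Sigma} f \dsb \, dV(\ux) \, g - \int_{B, \partial \Sigma} f \, dV(\uxb) \, \ds \, g,
\]
so it suffices to treat these two pieces separately and then recombine them. The guiding idea is that the first piece is genuinely a fermionic boundary term (handled by the fermionic Stokes' theorem) integrated over the bosonic bulk, while the second is a bosonic boundary term (handled by the bosonic Stokes' theorem) integrated over the fermionic bulk; the content of the theorem is that these two pieces reassemble into a single super-Dirac expression.

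For the first term I would observe that $dV(\ux) = dx_1 \ldots dx_m$ is a scalar top-form in the commuting variables and hence central, so it may be pulled outside the Berezin integral. For each fixed $\ux$ the operator $\int_B$ acts only on the anti-commuting variables, and applying the fermionic Stokes' theorem (Theorem \ref{fermcauchythm}) to $\int_B f \dsb g$ yields $\int_B [-(f \upxb) g + f (\upxb g)] dV(\uxb)$. Integrating the result over $\Sigma$ against $dV(\ux)$ gives the first contribution.

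For the second term the key point is that $dV(\uxb) = \exp(\uxb^2) - 1$ is an \emph{even} element of the Grassmann algebra containing no Clifford generators; since all $e_i, {e \grave{}}_j$ commute with all ${x \grave{}}_j$, it is central and may be moved to the right past $\ds$ and $g$. The remaining inner object $\int_{\partial \Sigma} f \ds g$ is then a purely bosonic boundary integral, to which the bosonic Stokes' theorem applies, producing $\int_{\Sigma} [(f \upx) g + f (\upx g)] dV(\ux)$. After interchanging the (finite-order) Berezin integral with the integration over $\Sigma$, both contributions are now integrals over $\Sigma$ carrying the full super volume-element $d V(x) = dV(\ux) dV(\uxb)$.

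It then remains to collect the four resulting terms,
\[
- (f \upxb) g + f (\upxb g) - (f \upx) g - f (\upx g),
\]
and recognise them as a super-Dirac expression. Reading the left action (\ref{leftDirac}) as $\px g = \upxb g - \upx g$ gives $f (\px g) = f(\upxb g) - f(\upx g)$, while the right action (\ref{rightDirac}) reads $f \px = -f \upxb - f \upx$, giving $(f \px) g = -(f \upxb) g - (f \upx) g$; summing these reproduces exactly the integrand $(f \px) g + f (\px g)$, which is the claim. I expect the main obstacle to be bookkeeping rather than analytic: one must carefully justify the interchange of $\int_B$ with the manifold integrations and track the signs arising from the non-commutativity between the orthogonal generators $e_j$ and the symplectic generators ${e \grave{}}_j$ when the volume-forms are commuted past $f$ and $g$. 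Once the two limit theorems and the centrality of the volume-forms are in hand, the merge into the super Dirac operator is forced by the defining relations (\ref{leftDirac}) and (\ref{rightDirac}).
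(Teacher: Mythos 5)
Your proposal is correct and follows essentially the same route as the paper: split the integral via the definition of $\dsx$ and (\ref{intsurfel}), apply the fermionic Stokes' theorem to the bulk term and the bosonic Stokes' theorem to the boundary term (using centrality of the volume-elements), and recombine the four resulting terms via the definitions (\ref{leftDirac}) and (\ref{rightDirac}) of the left and right super Dirac operators.
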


\section{Consequences and applications}
\label{conseq}

In this section we will discuss some corollaries of the general Stokes' theorem obtained in the previous section (see theorem \ref{gencauchythm}). First we consider the case where both $f$ and $g$ are monogenic functions. This leads to the following Cauchy theorem in superspace.

\begin{corollary}[(Cauchy theorem)]
Let $f ,g$ be right, respectively left monogenic, i.e. $f \in \cM(\Omega)_{m|2n}^{r}$, $g \in \cM(\Omega)_{m|2n}^{l}$. Then one has
\[
\int_{B, \Sigma, \partial \Sigma} f \dsx  g = 0.
\]
\end{corollary}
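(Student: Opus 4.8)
The plan is to deduce this corollary directly from the General Stokes' theorem (Theorem \ref{gencauchythm}), of which it is an immediate specialization. First I would note that $f$ and $g$ belong to $\cM(\Omega)_{m|2n}^{r}$ respectively $\cM(\Omega)_{m|2n}^{l}$, both of which are by definition subspaces of $C^1(\Omega)_{m|2n}$. Hence the regularity hypotheses of Theorem \ref{gencauchythm} are met, and we may apply it to write
\[
\int_{B, \Sigma, \partial \Sigma} f \dsx  g = \int_{B,\Sigma} \left[ (f \px) g + f (\px g) \right] d V(x).
\]

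The remaining step is to invoke the defining properties of monogenicity. Since $f$ is right monogenic, the super Dirac operator acting from the right (cf. formula (\ref{rightDirac})) annihilates it, so $f \px = 0$; since $g$ is left monogenic, the super Dirac operator acting from the left (cf. formula (\ref{leftDirac})) annihilates it, so $\px g = 0$. Substituting these into the bulk integrand shows that both summands $(f \px) g$ and $f (\px g)$ vanish identically on $\Sigma$, so the right-hand side is zero. This yields the asserted identity.

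The main point to verify — rather than a genuine obstacle — is simply that the hypotheses underlying Theorem \ref{gencauchythm} are in force, which is built into the definition $\cM \subset C^1$ of the monogenic function spaces. No further computation is required: the entire content of the corollary is that monogenicity of $f$ and $g$ makes the volume integrand in the general Stokes' theorem collapse to zero, leaving the boundary-type integral on the left equal to $0$.
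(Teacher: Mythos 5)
Your proof is correct and is exactly the argument the paper intends: the corollary is an immediate specialization of Theorem \ref{gencauchythm}, with $f\px = 0$ and $\px g = 0$ killing the volume integrand. Nothing further is needed.
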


If we put $f$ (resp. $g$) equal to the constant function $1$, we obtain a generalization of the well-known Cauchy theorem in the complex plane, stating that for any holomorphic function $\int_{\cC} f(z) dz =0$ independently of the choice of the contour $\cC$.

\begin{corollary}
Let $f ,g$ be right, respectively left monogenic in $\Omega$. Then for every compact oriented differentiable $m$-dimensional manifold $\Sigma \subset \Omega$ with smooth boundary $\partial \Sigma$ one has
\begin{eqnarray*}
\int_{B, \Sigma, \partial \Sigma} f \dsx  &=& 0\\
\int_{B, \Sigma, \partial \Sigma} \dsx  g &=& 0.
\end{eqnarray*}
\end{corollary}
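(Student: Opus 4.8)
The final statement is a specialization of the Cauchy theorem (the immediately preceding corollary) to the case where one of the two functions equals the constant $1$. So my plan is to obtain both identities as direct instances of the general Stokes' theorem (theorem \ref{gencauchythm}), exactly the way the text suggests when it remarks that setting $f$ or $g$ equal to $1$ recovers the classical Cauchy theorem. First I would prove the first identity $\int_{B, \Sigma, \partial \Sigma} f \dsx = 0$ by taking $g = 1$ in theorem \ref{gencauchythm}. Then I would prove the second identity by taking $f = 1$.

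For the first identity, set $g \equiv 1$. Since the constant function $1$ is trivially both left and right monogenic, we have $\px g = 0$, so the right-hand side of theorem \ref{gencauchythm} collapses to $\int_{B,\Sigma} (f \px) g \, dV(x) = \int_{B,\Sigma} (f \px) \, dV(x)$. Now $f$ is right monogenic, i.e.\ $f \in \cM(\Omega)_{m|2n}^{r}$, which by definition means $f \px = 0$; hence the integrand vanishes identically and the integral is zero. Symmetrically, for the second identity I would set $f \equiv 1$: then $f \px = 0$ kills the first term, leaving $\int_{B,\Sigma} f (\px g) \, dV(x) = \int_{B,\Sigma} (\px g) \, dV(x)$, and since $g$ is left monogenic, $\px g = 0$, so this integral vanishes as well. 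In both cases the notation $\int_{B, \Sigma, \partial \Sigma}$ on the left is interpreted through the defining formula (\ref{intsurfel}), but we never need to unpack it because theorem \ref{gencauchythm} already equates it with the volume integral on the right.

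There is essentially no obstacle here: the statement is a corollary of a corollary, and the only thing to check is that the constant function behaves correctly under the left/right Dirac operator and that it interacts properly with the product structure in $\dsx$. The mild point worth a sentence is that $f \dsx$ and $\dsx g$ are meaningful as they stand because $\dsx$ is itself $\cC$-valued (a vector in the $e_i$ and ${e \grave{}}_j$), so multiplying it on one side by a single function is a well-defined special case of the bilinear expression $f \dsx g$ with the missing factor replaced by $1$. I would simply note that $1 \in C^1(\Omega)_{m|2n}$ so the hypotheses of theorem \ref{gencauchythm} are met, and that $1$ commutes through the surface-element in the required way since the relevant contractions producing $f \dsx$ and $\dsx g$ are exactly the $g = 1$ and $f = 1$ instances of the definition (\ref{intsurfel}). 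With these remarks the proof is immediate and requires no computation beyond invoking the definition of monogenicity.
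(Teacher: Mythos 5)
Your proposal is correct and follows essentially the same route as the paper, which obtains this corollary by putting $f$ (respectively $g$) equal to the constant function $1$ in the preceding Cauchy theorem; your observation that $1$ is trivially left and right monogenic and lies in $C^1(\Omega)_{m|2n}$ is exactly the justification needed. The only difference is cosmetic: you invoke theorem \ref{gencauchythm} directly rather than its monogenic specialization, which amounts to the same computation.
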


In the sequel, we will need the following lemma, the proof of which is classical.

\begin{lemma}
Let $f$ be a $C^1$-function defined in an open set $\Omega \subset \mR^m$ containing $\uy$, let $B(\uy,R)$ be a ball of radius $R$ and center $\uy$ contained in $\Omega$. Further let $\nu_k^{m|0}$ be defined as in section \ref{preliminaries}. Then the following holds:
\begin{eqnarray*}
\lim_{R\rightarrow 0+} \int_{B(\uy,R)} \nu_k^{m|0}(\ux-\uy) f(\ux) dV(\ux) &=& 0, \quad \forall k \\
\lim_{R\rightarrow 0+} \int_{\partial B(\uy,R)} \nu_k^{m|0}(\ux-\uy) \ds f(\ux)  
&=& \left\{ \begin{array}{lll} 0& \quad& \forall k > 1\\- f(\uy) & \quad&  k=1. \end{array}  \right.
\end{eqnarray*}
\label{intlemma}
\end{lemma}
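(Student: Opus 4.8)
The plan is to reduce both assertions to the explicit size of the kernels $\nu_k^{m|0}$ near the origin. From $\nu_2^{m|0}(\ux) = \frac{1}{(m-2)\omega_m}|\ux|^{2-m}$ (with $\omega_m$ the surface area of $S^{m-1}$), from the relations $\Delta_b^{j}\nu_{2l}^{m|0} = \nu_{2l-2j}^{m|0}$, and from $\nu_{2l+1}^{m|0} = -\upx \nu_{2l+2}^{m|0}$, one reads off the homogeneity degree $k-m$: near the origin $|\nu_k^{m|0}(\ux)| \le C_k\,|\ux|^{k-m}$ when $k<m$, while for $k\ge m$ the kernel is bounded (up to a harmless logarithmic factor), which only strengthens the estimates below. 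I would record this estimate first, as every step is a consequence of it.

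For the volume integral I would set $\uu = \ux-\uy$ and pass to polar coordinates $\uu = \rho\zeta$, $\zeta\in S^{m-1}$, $dV(\ux) = \rho^{m-1}\,d\rho\,dS(\zeta)$, with $dS$ the scalar Lebesgue measure on the unit sphere. Since $f$ is continuous, hence bounded on a neighbourhood of $\uy$, the size estimate gives
\[
\Bigl|\int_{B(\uy,R)} \nu_k^{m|0}(\ux-\uy)\, f(\ux)\, dV(\ux)\Bigr| \le C\int_0^R \rho^{\,k-m}\,\rho^{\,m-1}\,d\rho = C\,\frac{R^{k}}{k},
\]
which tends to $0$ as $R\to 0+$ for every $k\ge 1$.

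For the surface integrals I would use the classical identification $\ds = \frac{\uu}{R}\,R^{m-1}dS(\zeta)$ on $\partial B(\uy,R)$ oriented by the outward normal $\uu/R$. For $k>1$ the integrand is then of order $|\uu|^{k-m}\cdot R^{m-1} = R^{k-1}$ against the finite measure on $S^{m-1}$, so the integral is $O(R^{k-1})\to 0$. The case $k=1$ is the crux and the only non-vanishing limit. Here I would substitute the Cauchy kernel $\nu_1^{m|0}(\ux) = \frac{1}{\omega_m}\frac{\ux}{|\ux|^{m}}$, which follows from $\nu_1^{m|0} = -\upx\nu_2^{m|0}$ together with $\upx|\ux|^{2-m} = (2-m)|\ux|^{-m}\ux$, and compute
\[
\nu_1^{m|0}(\uu)\,\ds = \frac{1}{\omega_m}\frac{\uu}{R^{m}}\cdot\frac{\uu}{R}\,R^{m-1}dS(\zeta) = \frac{1}{\omega_m}\frac{\uu^2}{R^{2}}\,dS(\zeta) = -\frac{1}{\omega_m}\,dS(\zeta),
\]
the last step using the Clifford identity $\uu^2 = -|\uu|^2 = -R^2$. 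Hence
\[
\int_{\partial B(\uy,R)} \nu_1^{m|0}(\ux-\uy)\,\ds\, f(\ux) = -\frac{1}{\omega_m}\int_{S^{m-1}} f(\uy+R\zeta)\,dS(\zeta) \longrightarrow -f(\uy),
\]
by continuity of $f$ and $\int_{S^{m-1}}dS = \omega_m$.

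The main obstacle is precisely the $k=1$ surface computation, where the normalizing constant and, above all, the sign must come out correctly. This hinges on tracking two conventions consistently: the normalization of $\nu_2^{m|0}$ against $\Delta_b = -\sum_j\partial_j^2$ (so that $\nu_1^{m|0}$ is the stated Cauchy kernel) and the identification of $\ds$ with $\frac{\uu}{R}\,R^{m-1}dS(\zeta)$ for the outward orientation. Once these are fixed, the cancellation $\uu^2/R^2 = -1$ delivers the reproducing property, and the remaining estimates are routine applications of boundedness of $f$ and polar coordinates.
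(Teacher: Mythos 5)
Your proposal is correct: the explicit homogeneity estimate $|\nu_k^{m|0}(\uu)|\lesssim |\uu|^{k-m}$, the polar-coordinate bounds giving $O(R^k)$ and $O(R^{k-1})$, and the $k=1$ computation $\nu_1^{m|0}(\uu)\,\ds = \frac{1}{\omega_m}\frac{\uu^2}{R^2}\,dS = -\frac{1}{\omega_m}dS$ (using $\uu^2=-R^2$ and the paper's conventions $\Delta_b=-\sum_j\pj^2$, $\nu_1^{m|0}=-\upx\nu_2^{m|0}$) reproduce exactly the stated limits, including the sign $-f(\uy)$. The paper itself gives no proof, declaring it classical, and what you have written out is precisely that standard classical argument.
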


Now we can formulate the following theorem.
\begin{theorem}[(Cauchy-Pompeiu)]
Let $\Omega \subset \mR^m$ be an open set and $\Sigma \subset \Omega$ a compact oriented differentiable $m$-dimensional manifold with smooth boundary $\partial \Sigma$.
Let $g \in C^1(\Omega)_{m |2n}$ and let $\nu_1^{m|2n}$ be the fundamental solution of the super Dirac operator. Then one has
\begin{eqnarray*}
&&\int_{B, \Sigma, \partial \Sigma} \nu_1^{m|2n}(x-y) \dsx g(x)  - \int_{B, \Sigma} \nu_1^{m|2n}(x-y) (\px g(x)) dV(x)\\
\vspace{-2mm}
\\&=& \left\{ \begin{array}{lll} 0 &\quad& \mbox{if  $\uy \in \Omega \backslash \Sigma$} \\
g(y) dV(\uyb)& \quad& \mbox{if $\uy \in \stackrel{\circ}{\Sigma}$}. \end{array} \right.
\end{eqnarray*}
\label{pompeiuthm}
\end{theorem}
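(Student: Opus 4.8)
The plan is to obtain this formula from the general Stokes' theorem (Theorem \ref{gencauchythm}) by taking $f = \nu_1^{m|2n}(x-y)$, exactly as one does classically in Clifford analysis, and then to handle the bosonic singularity of $\nu_1^{m|2n}$ at $\ux = \uy$ by the standard device of excising a small ball. The case $\uy \in \Omega \setminus \Sigma$ is immediate: there $\nu_1^{m|2n}(x-y)$ is smooth on all of $\Sigma$, and since it is a two-sided fundamental solution of the super Dirac operator we have $\nu_1^{m|2n}(x-y)\px = 0$ whenever $\ux \neq \uy$. Applying Theorem \ref{gencauchythm} with $f = \nu_1^{m|2n}(x-y)$ then annihilates the $(f\px)g$ term, giving $\int_{B,\Sigma,\partial\Sigma}\nu_1^{m|2n}\dsx g = \int_{B,\Sigma}\nu_1^{m|2n}(\px g)\,dV(x)$, which is precisely the asserted vanishing.

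For $\uy \in \stackrel{\circ}{\Sigma}$ I would set $\Sigma_R = \Sigma \setminus B(\uy,R)$ with $R$ so small that $\overline{B(\uy,R)} \subset \stackrel{\circ}{\Sigma}$, so that the oriented boundary is $\partial\Sigma_R = \partial\Sigma - \partial B(\uy,R)$ and $\nu_1^{m|2n}(x-y)$ is smooth on $\Sigma_R$. Since again $\nu_1^{m|2n}(x-y)\px = 0$ on $\Sigma_R$, Theorem \ref{gencauchythm} yields $\int_{B,\Sigma_R,\partial\Sigma_R}\nu_1^{m|2n}\dsx g = \int_{B,\Sigma_R}\nu_1^{m|2n}(\px g)\,dV(x)$. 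Unfolding definition (\ref{intsurfel}) and splitting the boundary term along $\partial\Sigma_R = \partial\Sigma - \partial B(\uy,R)$, only the bosonic surface piece $-\int_{B,\partial\Sigma_R} f\,dV(\uxb)\,\ds\,g$ feels the inner sphere, because the singularity is purely bosonic; this produces the extra contribution $+\int_{B,\partial B(\uy,R)}\nu_1^{m|2n}\,dV(\uxb)\,\ds\,g$.

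Then I would let $R \to 0$. All integrals over $\Sigma_R$ converge to the corresponding integrals over $\Sigma$ since the bosonic components $\nu_k^{m|0}$ of $\nu_1^{m|2n}$ are locally integrable; precisely, the ball contributions vanish by the first limit in Lemma \ref{intlemma}. The only genuinely new contribution is the inner-sphere term. Expanding $\nu_1^{m|2n}(x-y)$ with $\uxb$ replaced by $\uxb-\uyb$, the second limit in Lemma \ref{intlemma} kills every summand whose bosonic factor is $\nu_k^{m|0}$ with $k>1$; as the factors $\nu_{2k+2}^{m|0}$ always have index $\geq 2$ while $\nu_{2k+1}^{m|0}$ reduces to $\nu_1^{m|0}$ only for $k=0$, the sole surviving summand is $\frac{1}{n!}\nu_1^{m|0}(\ux-\uy)(\uxb-\uyb)^{2n}$. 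Its fermionic factor $(\uxb-\uyb)^{2n}/n!$ and the element $dV(\uxb)$ lie in $\Lambda_{2n}$ and carry no Clifford generators, hence are central and commute through both $\ds$ and the vector $\nu_1^{m|0}$; this lets the bosonic sphere limit $\int_{\partial B(\uy,R)}\nu_1^{m|0}(\ux-\uy)\,\ds\,g \to -g(\uy,\uxb)$ be applied inside the Berezin integral.

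The main obstacle is the remaining fermionic computation: I need the reproducing identity $\int_B \frac{(\uxb-\uyb)^{2n}}{n!}\,F(\uxb) = F(\uyb)$, that is, the fact that $\frac{1}{n!}(\uxb-\uyb)^{2n} = \prod_{j=1}^{2n}({x \grave{}}_{j}-{y \grave{}}_{j})$ acts as the Grassmann delta for $\int_B = \partial_{{x \grave{}}_{2n}}\cdots\partial_{{x \grave{}}_{1}}$. Granting this, the inner-sphere term tends to $-\int_B \frac{(\uxb-\uyb)^{2n}}{n!}\,dV(\uxb)\,g(\uy,\uxb) = -dV(\uyb)\,g(y) = -g(y)\,dV(\uyb)$. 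Substituting into the limiting Stokes identity gives $\int_{B,\Sigma,\partial\Sigma}\nu_1^{m|2n}\dsx g - g(y)\,dV(\uyb) = \int_{B,\Sigma}\nu_1^{m|2n}(\px g)\,dV(x)$, which rearranges to the stated formula for $\uy \in \stackrel{\circ}{\Sigma}$. The delicate points to verify are the orientation and sign of the inner sphere, the local integrability justifying each $R\to 0$ passage, and the bookkeeping that keeps the non-commuting vector $\nu_1^{m|0}$ to the left of $\ds$ and of $g$ so that the scalar fermionic factors may be extracted cleanly.
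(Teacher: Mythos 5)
Your proposal is correct and follows essentially the same route as the paper: excise a small ball around $\uy$, apply Theorem \ref{gencauchythm} together with the right monogenicity of $\nu_1^{m|2n}(x-y)$ for $\ux\neq\uy$, use Lemma \ref{intlemma} to kill all contributions except the one coming from $\frac{1}{n!}\nu_1^{m|0}(\ux-\uy)(\uxb-\uyb)^{2n}$, and evaluate the remaining Berezin integral via the Grassmann delta to produce $g(y)\,dV(\uyb)$. The only cosmetic difference is that the paper first reduces to $g=g_1(\ux)g_2(\uxb)$ by linearity before extracting the fermionic factors, while you argue directly via centrality of $dV(\uxb)$ and $(\uxb-\uyb)^{2n}$; both are fine.
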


\begin{proof}
Due to linearity it suffices to prove this formula for $g = g_1(\ux) g_2(\uxb)$ where $g_1$ contains only commuting variables and $g_2$ contains only anti-commuting variables.

The formula where $\uy \in \Omega \backslash \Sigma$ follows from a direct application of theorem \ref{gencauchythm}. So suppose $\uy \in \stackrel{\circ}{\Sigma}$. Then we consider a ball $\Gamma = B(\uy,R)$ contained in $\stackrel{\circ}{\Sigma}$ and we apply theorem \ref{gencauchythm} to $\Sigma \backslash \Gamma$. We find that 
\begin{eqnarray*}
&&\int_{B, \Sigma \backslash \Gamma, \partial (\Sigma \backslash \Gamma)} \nu_1^{m|2n}(x-y) \dsx g(x) \\
&=&\int_{B, \Sigma \backslash \Gamma} [( \nu_1^{m|2n}(x-y)\px ) g(x) +  \nu_1^{m|2n}(x-y) (\px  g(x))]dV(x)\\
&=&\int_{B, \Sigma \backslash \Gamma} \nu_1^{m|2n}(x-y) (\px  g(x)) dV(x),
\end{eqnarray*}
as $\nu_1^{m|2n}(x-y)$ is right monogenic for $\ux \neq \uy$.

If $R \rightarrow 0+$ then the right-hand side tends to 
\[
\int_{B, \Sigma } \nu_1^{m|2n}(x-y) (\px  g(x)) dV(x)
\]
because $\nu_1^{m|2n}(x-y) (\px  g(x))$ is integrable. The left-hand side is calculated as
\begin{eqnarray*}
&&\int_{B, \Sigma \backslash \Gamma, \partial (\Sigma \backslash \Gamma)} \nu_1^{m|2n}(x-y) \dsx g(x) \\
&=& \int_{B, \Sigma \backslash \Gamma} \nu_1^{m|2n}(x-y) \dsb dV(\ux) g(x) -\int_{B, \partial (\Sigma \backslash \Gamma)} \nu_1^{m|2n}(x-y) \ds dV(\uxb) g(x)\\
&=&  \int_{B, \Sigma} \nu_1^{m|2n}(x-y) \dsb dV(\ux) g(x) -\int_{B, \partial \Sigma} \nu_1^{m|2n}(x-y) \ds dV(\uxb) g(x)\\
&& - \int_{B, \Gamma} \nu_1^{m|2n}(x-y) \dsb dV(\ux) g(x) +\int_{B, \partial \Gamma} \nu_1^{m|2n}(x-y) \ds dV(\uxb) g(x)\\
&=& \int_{B, \Sigma, \partial \Sigma}  \nu_1^{m|2n}(x-y) \dsx g(x) \\
&& - \int_{B, \Gamma} \nu_1^{m|2n}(x-y) \dsb dV(\ux) g(x) +\int_{B, \partial \Gamma} \nu_1^{m|2n}(x-y) \ds dV(\uxb) g(x).
\end{eqnarray*}

Now we simplify the expression in the last line. Using lemma \ref{intlemma} we see that only the term 
\[
\nu_1^{m|0}(\ux-\uy) \frac{(\uxb-\uyb)^{2n}}{n!} = \nu_1^{m|0}(\ux-\uy) \delta(\uxb-\uyb)
\]
in $\nu_1^{m|2n}(x-y)$ will play a role. This has the following result
\begin{eqnarray*}
&& - \int_{B, \Gamma} \nu_1^{m|2n}(x-y) \dsb dV(\ux) g(x)+  \int_{B, \partial \Gamma} \nu_1^{m|2n}(x-y) \ds dV(\uxb) g(x)\\
&=&\int_{B, \partial \Gamma}  \nu_1^{m|0}(\ux-\uy) \delta(\uxb-\uyb) \ds dV(\uxb) g(x)\\
&=&\int_{\partial \Gamma} \nu_1^{m|0}(\ux-\uy) \ds g_1(\ux) \left[ \int_B  \delta(\uxb-\uyb)  dV(\uxb)  g_2(\uxb) \right]\\
&=&\int_{\partial \Gamma} \nu_1^{m|0}(\ux-\uy) \ds  g_1(\ux) dV(\uyb) g_2(\uyb)\\
&=&- g_1(\uy) dV(\uyb) g_2(\uyb)\\
&=&-g(y) dV(\uyb),
\end{eqnarray*}
when taking the limit $R \rightarrow 0 +$ and where we have applied lemma \ref{intlemma} in the penultimate line.

Putting all terms together completes the proof.
\end{proof}

\begin{remark}
The result of theorem \ref{pompeiuthm} is not completely as desired: we have found that the right-hand side equals $g(y)dV(\uyb)$. The function $dV(\uyb)$ is absent in the classical result (see e.g. \cite{MR697564}). One could propose to divide both sides of the Cauchy-Pompeiu formula by $dV(\uyb)$ to improve the result. This is however not possible, because $dV(\uyb)$ is nilpotent.
\end{remark}

If moreover $g$ is left monogenic, the Cauchy-Pompeiu theorem reduces to the following representation formula for monogenic functions in superspace.

\begin{corollary}
If $g \in \cM(\Omega)_{m|2n}^{l}$, then one has
\begin{eqnarray*}
\int_{B, \Sigma, \partial \Sigma} \nu_1^{m|2n}(x-y) \dsx g(x) &=&  \left\{ \begin{array}{lll} 0& \quad & \mbox{if  $\uy \in \Omega \backslash \Sigma$} \\
g(y) dV(\uyb) &\quad& \mbox{if $\uy \in \stackrel{\circ}{\Sigma}$}. \end{array} \right.
\end{eqnarray*}
\end{corollary}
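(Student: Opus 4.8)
The plan is to obtain this representation formula as an immediate specialization of the Cauchy-Pompeiu formula (theorem \ref{pompeiuthm}). The only structural difference between the two statements is the absence, in the corollary, of the volume term $\int_{B, \Sigma} \nu_1^{m|2n}(x-y) (\px g(x)) dV(x)$; so the entire task reduces to showing that this term drops out under the extra hypothesis of left monogenicity.

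First I would verify that the hypotheses of theorem \ref{pompeiuthm} are met. By the definition of the monogenic function space given in section \ref{preliminaries}, the inclusion $\cM(\Omega)_{m|2n}^{l} \subset C^1(\Omega)_{m|2n}$ holds, so $g \in C^1(\Omega)_{m|2n}$ and theorem \ref{pompeiuthm} applies verbatim. Moreover, being left monogenic means precisely that $\px g = 0$ identically on $\Omega$, hence on $\Sigma \subset \Omega$ as well. I would then substitute $\px g = 0$ into the Cauchy-Pompeiu formula: the integrand $\nu_1^{m|2n}(x-y) (\px g(x))$ vanishes pointwise on $\Sigma$, so the volume integral is identically zero. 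What survives is exactly the asserted identity, with the case distinction $\uy \in \Omega \backslash \Sigma$ versus $\uy \in \stackrel{\circ}{\Sigma}$ and the factor $dV(\uyb)$ inherited unchanged.

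There is no genuine obstacle at this stage, and no step warrants being singled out as hard: all of the substantive analytic work — the handling of the singularity of $\nu_1^{m|2n}$ at $\uy$, the limiting argument over the shrinking ball $\Gamma$, and the emergence of the nilpotent factor $dV(\uyb)$ via lemma \ref{intlemma} — has already been carried out in the proof of theorem \ref{pompeiuthm}. The corollary is therefore a one-line consequence, and the proof I would write simply records the vanishing of the volume term for left monogenic $g$.
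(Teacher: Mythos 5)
Your proposal is correct and matches the paper's own reasoning: the corollary is stated as an immediate specialization of theorem \ref{pompeiuthm}, with the volume term vanishing because $\px g = 0$ for left monogenic $g$. Nothing further is needed.
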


It is easy to generalize this corollary to $k$-monogenic functions, i.e. null-solutions of $\px^k$. We obtain the following theorem, where $\nu_j^{m|2n}(x)$ denotes the fundamental solution of $\px^j$, determined in \cite{DBS6}. Note that similar formulae also exist in classical Clifford analysis, see e.g. \cite{JR3} or \cite{JR2} for the case of polynomial type Dirac operators.

\begin{theorem}
Let $\Omega \subset \mR^m$ be an open set and $\Sigma \subset \Omega$ a compact oriented differentiable $m$-dimensional manifold with smooth boundary $\partial \Sigma$.
Let $g \in C^k(\Omega)_{m |2n}$ be $k$-monogenic, i.e. $\px^k g = 0$. Then one has
\begin{eqnarray*}
\int_{B, \Sigma, \partial \Sigma} \sum_{j=1}^{k}(-1)^{j+1} \nu_j^{m|2n}(x-y) \dsx \px^{j-1} g(x) &=& \left\{ \begin{array}{ll} 0  & \mbox{if  $\uy \in \Omega \backslash \Sigma$} \\
g(y) dV(\uyb)   & \mbox{if $\uy \in \stackrel{\circ}{\Sigma}$}. \end{array} \right.
\end{eqnarray*}
\end{theorem}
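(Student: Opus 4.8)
The plan is to prove the formula by a telescoping argument built on the first-order Cauchy-Pompeiu formula (Theorem \ref{pompeiuthm}) and the general Stokes' theorem (Theorem \ref{gencauchythm}), exploiting the recursive structure of the iterated fundamental solutions. The key algebraic input is that the $\nu_j^{m|2n}$ form a descending chain under the Dirac operator: acting from either side one has $\px \nu_j^{m|2n} = \nu_{j-1}^{m|2n} = \nu_j^{m|2n}\px$ for $j \geq 2$ (pointwise away from the singularity), while $\nu_1^{m|2n}$ is the genuine fundamental solution, $\px\nu_1^{m|2n} = \nu_1^{m|2n}\px = \delta$. These relations follow from the construction of the $\nu_j^{m|2n}$ in \cite{DBS6}; their bosonic shadows are exactly the relations $\Delta_b^j\nu_{2l}^{m|0} = \nu_{2l-2j}^{m|0}$ and $\nu_{2k+1}^{m|0} = -\upx\nu_{2k+2}^{m|0}$ recalled in Section \ref{preliminaries}.

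First I would treat each summand separately. As in the proof of Theorem \ref{pompeiuthm}, by linearity it suffices to take $g = g_1(\ux)g_2(\uxb)$. For $j \geq 2$ I apply the general Stokes' theorem (Theorem \ref{gencauchythm}) with $f = \nu_j^{m|2n}(x-y)$ and the function $\px^{j-1}g(x)$ on the punctured manifold $\Sigma\setminus\Gamma$, $\Gamma = B(\uy,R)$, and then let $R\to 0+$. Since $\nu_j^{m|2n}\px = \nu_{j-1}^{m|2n}$ holds pointwise for $\ux\neq\uy$, the right-hand side of Stokes becomes $\int_{B,\Sigma}[\nu_{j-1}^{m|2n}(x-y)(\px^{j-1}g) + \nu_j^{m|2n}(x-y)(\px^j g)]dV(x)$, and the contribution of the inner sphere $\partial\Gamma$ vanishes in the limit because the singularity of $\nu_j^{m|2n}$ for $j\geq 2$ is too weak to contribute. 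For $j = 1$ I invoke Theorem \ref{pompeiuthm} directly, which produces the distinguished term $g(y)dV(\uyb)$ (for $\uy\in\stackrel{\circ}{\Sigma}$) together with $\int_{B,\Sigma}\nu_1^{m|2n}(x-y)(\px g)dV(x)$.

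Next I would assemble the alternating sum. Writing $A_l = \int_{B,\Sigma}\nu_l^{m|2n}(x-y)(\px^l g(x))dV(x)$, the previous step gives, for $\uy\in\stackrel{\circ}{\Sigma}$,
\[
\int_{B,\Sigma,\partial\Sigma}\sum_{j=1}^k(-1)^{j+1}\nu_j^{m|2n}(x-y)\dsx\px^{j-1}g(x) = g(y)dV(\uyb) + A_1 + \sum_{j=2}^k(-1)^{j+1}(A_{j-1}+A_j).
\]
The sum telescopes: every $A_l$ with $1\leq l\leq k-1$ occurs with opposite signs and cancels, leaving only $g(y)dV(\uyb) + (-1)^{k+1}A_k$. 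Since $g$ is $k$-monogenic, $\px^k g = 0$, hence $A_k = 0$ and the right-hand side is exactly $g(y)dV(\uyb)$. For $\uy\in\Omega\setminus\Sigma$ the functions $\nu_j^{m|2n}(x-y)$ are smooth on $\Sigma$, so no ball needs to be removed and no $\delta$-term arises; the identical telescoping then yields $(-1)^{k+1}A_k = 0$.

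The main obstacle I anticipate is the rigorous justification of the first step for $j\geq 2$: namely that pushing the general Stokes' theorem across the singularity of $\nu_j^{m|2n}$ contributes nothing over the shrinking sphere $\partial\Gamma$. This requires an extension of Lemma \ref{intlemma} to the higher fundamental solutions $\nu_j^{m|0}$, confirming that only the first-order kernel $\nu_1^{m|0}$ carries a nonzero boundary limit. After splitting $\nu_j^{m|2n}(x-y)$ into its bosonic factors multiplied by powers of $(\uxb-\uyb)$ and using the identity $\uxb^{2n}/n! = \delta(\uxb-\uyb)$ to collapse the fermionic $\int_B$ exactly as in the proof of Theorem \ref{pompeiuthm}, this reduces precisely to the vanishing statements of the generalized Lemma \ref{intlemma}, which is where the bulk of the technical work sits.
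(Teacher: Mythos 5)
Your proposal is correct and follows essentially the same route as the paper, whose entire proof is the remark that one argues as for theorem \ref{pompeiuthm} using $\px \nu_j^{m|2n} = \nu_{j-1}^{m|2n}$; your term-by-term application of Stokes on the punctured manifold, the telescoping of the alternating sum, and the use of $\px^k g = 0$ to kill the surviving term $(-1)^{k+1}A_k$ is exactly the intended argument. The only point worth adding is that the ``extension'' of lemma \ref{intlemma} you anticipate as the main technical obstacle is already contained in the paper: that lemma is stated for all $k$, with the boundary limit over the shrinking sphere vanishing for every $k>1$, which is precisely what makes the inner-sphere contributions of the $j\geq 2$ kernels disappear.
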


\begin{proof}
Similar to the proof of theorem \ref{pompeiuthm}, using the fact that $\px  \nu_j^{m|2n} =  \nu_{j-1}^{m|2n}$.
\end{proof}

\section{Conclusions}

In this paper we have established a Cauchy integral formula in superspace. We have first obtained a Stokes' theorem in the purely fermionic case. When combining this theorem in a suitable way with the classical (bosonic) Stokes' theorem, we were able to construct the general formula. In this formula, the integration over the boundary of a supermanifold consists of two parts: one integration over the even boundary and one over the odd boundary.

Finally we used this Stokes' formula to establish a Cauchy theorem in superspace, as well as a Cauchy-Pompeiu formula, using the fundamental solution of the super Dirac operator as kernel.


\begin{thebibliography}{99}

\bibitem{MR745128}
{\sc N. Aronszajn, T.~M. Creese \and L.~J. Lipkin},
\newblock {\em Polyharmonic functions},
\newblock Oxford Mathematical Monographs. (The Clarendon Press, Oxford University
  Press, New York, 1983).

\bibitem{MR1175751}
{\sc C. Bartocci, U. Bruzzo \and D. Hern{\'a}ndez~Ruip{\'e}rez},
\newblock {\em The geometry of supermanifolds}, vol.~71 of {\em Mathematics and
  its Applications}
\newblock (Kluwer Academic Publishers Group, Dordrecht, 1991).

\bibitem{MR554332}
{\sc M. Batchelor},
\newblock `Two approaches to supermanifolds',
\newblock {\em Trans. Amer. Math. Soc.} 258 (1980) 257--270.

\bibitem{MR0208930}
{\sc F.~A. Berezin},
\newblock {\em The method of second quantization}.
\newblock Pure and Applied Physics, Vol. 24. (Academic Press, New York, 1966).

\bibitem{MR732126}
{\sc F.~A. Berezin},
\newblock {\em Introduction to algebra and analysis with anticommuting
  variables}
\newblock (Moskov. Gos. Univ., Moscow, 1983).

\bibitem{MR697564}
{\sc F. Brackx, R. Delanghe \and F. Sommen},
\newblock {\em Clifford analysis}, vol.~76 of {\em Research Notes in
  Mathematics}
\newblock (Pitman (Advanced Publishing Program), Boston, MA, 1982).

\bibitem{DBS4}
{\sc H. De~Bie \and F. Sommen},
\newblock `A Clifford analysis approach to superspace',
\newblock {\em Ann. Physics} 322 (2007) 2978--2993.

\bibitem{DBS1}
{\sc H. De~Bie \and F. Sommen},
\newblock `Correct rules for Clifford calculus on superspace',
\newblock {\em Adv. Appl. Clifford Algebr.} 17 (2007) 357-382.


\bibitem{DBS3}
{\sc H. De~Bie \and F. Sommen},
\newblock `Hermite and Gegenbauer polynomials in superspace using Clifford
  analysis',
\newblock {\em  J. Phys. A: Math. Theor.} 40 (2007) 10441-10456.

\bibitem{DBS5}
{\sc H. De~Bie \and F. Sommen},
\newblock `Spherical harmonics and integration in superspace',
\newblock {\em J. Phys. A: Math. Theor.} 40 (2007) 7193-7212.

\bibitem{DBS9}
{\sc H. De~Bie},
\newblock `Fourier transform and related integral transforms in superspace',
\newblock {\em J. Math. Anal. Appl.} 345 (2008) 147-164.

\bibitem{DBS6}
{\sc H. De~Bie \and F. Sommen},
\newblock `Fundamental solutions for the super Laplace and Dirac operators and
  all their natural powers',
\newblock {\em J. Math. Anal. Appl.} 338 (2008) 1320--1328.

\bibitem{MR1169463}
{\sc R. Delanghe, F. Sommen \and V. Sou{\v{c}}ek},
\newblock {\em Clifford algebra and spinor-valued functions}, vol.~53 of {\em
  Mathematics and its Applications}
\newblock (Kluwer Academic Publishers Group, Dordrecht, 1992).

\bibitem{MR778559}
{\sc B. DeWitt},
\newblock {\em Supermanifolds},
\newblock Cambridge Monographs on Mathematical Physics (Cambridge University
  Press, Cambridge, 1984).

\bibitem{MR1130821}
{\sc J.~E. Gilbert \and M. A.~M. Murray},
\newblock {\em Clifford algebras and {D}irac operators in harmonic analysis},
  vol.~26 of {\em Cambridge Studies in Advanced Mathematics}
\newblock (Cambridge University Press, Cambridge, 1991).

\bibitem{MR0580292}
{\sc B. Kostant},
\newblock `Graded manifolds, graded {L}ie theory, and prequantization',
\newblock {\em Differential geometrical methods in mathematical physics
  (Proc. Sympos., Univ. Bonn, Bonn, 1975)}.   Lecture Notes in Mathematics 570 (Springer, Berlin, 1977),
  pp.~177--306.

\bibitem{MR565567}
{\sc D.~A. Le{\u\i}tes},
\newblock `Introduction to the theory of supermanifolds',
\newblock {\em Uspekhi Mat. Nauk} 35 (1980), 3--57.

\bibitem{MR1402921}
{\sc  V.~P. Palamodov},
\newblock `Cogitations over {B}erezin's integral',
\newblock {\em Contemporary mathematical physics}, vol.~175 of {\em Amer.
  Math. Soc. Transl. Ser. 2}. (Amer. Math. Soc., Providence, RI, 1996),
  pp.~177--189.

\bibitem{MR574696}
{\sc A. Rogers},
\newblock `A global theory of supermanifolds',
\newblock {\em J. Math. Phys.} 21 (1980) 1352--1365.


\bibitem{JR3}
{\sc J. Ryan},
\newblock `Dirac operators, Schr\"odinger type operators in $\mC^n$ and Huygens' principle',
\newblock {\em J. Funct. Anal.} 87 (1989) 321--347.


\bibitem{JR2}
{\sc J. Ryan},
\newblock `Cauchy-Green type formulae in Clifford analysis',
\newblock {\em Trans. Amer. Math. Soc.} 347 (1995) 1331--1341.

\bibitem{MR1012510}
{\sc F. Sommen},
\newblock `Monogenic differential calculus',
\newblock {\em Trans. Amer. Math. Soc.} 326 (1991) 613--632.


\end{thebibliography}
\end{document}